  \gdef\sformat{"Date: 
\endgroup
\directlua{
 local cmd="git show -s --format='"..\sformat.."'"
 local r=io.popen(cmd):read("*a")
 if (r) then
      tex.print("\string\\def\string\\COMMIT{"..r.."}")
 end
 }
\or
\relax\fi
\makeatother
\ifdefined\COMMIT
        \usepackage{background}
        \backgroundsetup{%
         pages=all, placement=bottom,angle=0,scale=2,%
         vshift=20pt,%
         contents={\COMMIT}}
\fi
\else
\usepackage[british]{babel}
\fi

\newcommand{\KK}{\mathbb{K}}
\newcommand{\cS}{\mathcal{S}}
\newcommand{\cQ}{\mathcal{Q}}
\newcommand{\cB}{\mathcal B}
\newcommand{\cU}{\mathcal U}
\newtheorem*{mth}{Main Theorem}
\newcommand{\wt}{\mathrm{wt}}
\newcommand{\cL}{\mathcal{L}}
\newcommand{\GG}{\mathbb{G}}
\newcommand{\fQ}{\mathfrak{Q}}
\newcommand{\fP}{\mathfrak{P}}
\newcommand{\fA}{\mathfrak{A}}
\newcommand{\fC}{\mathfrak{C}}
\newcommand{\cM}{\mathcal{M}}
\newcommand{\cC}{\mathcal{C}}
\newcommand{\cV}{\mathcal{V}}
\newcommand{\cW}{\mathcal{W}}
\newcommand{\cT}{\mathcal{T}}
\newcommand{\cX}{\mathfrak{X}}
\newcommand{\fB}{\mathfrak{B}}
\newcommand{\cN}{\mathcal{N}}
\newcommand{\LL}{\mathbb{L}}
\newcommand{\TT}{\mathbb{T}}
\newcommand{\fX}{\mathfrak{X}}
\newcommand{\cG}{\mathcal{G}}
\newcommand{\ccQ}{\mathcal{Q}}
\newcommand{\RM}{\mathrm{RM}\,}
\newcommand{\trace}{\mbox{\itshape trace}}
\newcommand{\diag}{\mbox{\itshape diag}}
\newcommand{\spin}{\text{\itshape spin}}
\newcommand{\bE}{\mathbb E}
\newcommand{\baM}{\overline{M}}
\newcommand{\PG}{\mathrm{PG}}
\newcommand{\Sp}{\mathrm{Sp}}
\newcommand{\GL}{\mathrm{GL}}
\newcommand{\F}{\mathbb{F}}
\newcommand{\FF}{\mathbb{F}}
\newcommand{\NN}{\mathbb{N}}
\newcommand{\Rad}{\mathrm{Rad}(\varphi)}
\newcommand{\Res}{\mathrm{Res}}
\newcommand{\rank}{\mathrm{rank}\,}
\newcommand{\N}{\mathcal{N}}
\newcommand{\cP}{\mathcal{P}}
\newcommand{\ox}{\overline{x}}
\newcommand{\ov}{\overline{v}}
\newcommand{\oy}{\overline{y}}
\newcommand{\oU}{\widetilde{U}}
\newcommand{\oS}{\overline{S}}
\newcommand{\oM}{\overline{M}}
\newcommand{\ou}{\overline{u}}
\newcommand{\oV}{\overline{V}}
\newcommand{\oPi}{{\overline{\Pi}}_{\varphi}}
\newcommand{\oRad}{\overline{\mathrm{Rad}}(\varphi)}

\newcommand{\R}{\mathrm{Rad}(\varphi_{\Pi})}

\newcommand{\bZ}{\bf{0}}
\newcommand{\codim}{\mathrm{codim}\,}
\theoremstyle{plain}
\newtheorem{lemma}{Lemma}[section]
\newtheorem{theorem}[lemma]{Theorem}
\newtheorem{corollary}[lemma]{Corollary}

\newtheorem{claim}{Claim}
\theoremstyle{definition}
\newtheorem{remark}[lemma]{Remark}
\newcommand\sqval{\frac{q-1}{2}\left(q^{2n-(r+d)}-q^{2n-(r+d)-1}+q^{n-(r+d)/2}+
q^{n-(r+d)/2-1}\right)}
\def\nonsquare{\ensuremath{%
    \setbox0\hbox{$\square$}%
    \rlap{\hbox to \wd0{\hss\slash\hss}}\box0
}}
\begin{document}
\title{Minimum distance of Orthogonal Line-Grassmann Codes in even characteristic}
\author{Ilaria Cardinali and Luca Giuzzi}
\date{}
\maketitle
\begin{abstract}
  In this paper we determine the minimum distance  of orthogonal
  line-Grassmann codes for $q$ even. The case $q$ odd was solved in 
  \cite{ILP}.
  For $n\neq 3$ we also determine the second smallest distance.
  Furthermore, we show that for $q$ even
  all minimum weight codewords are equivalent and that
  symplectic line-Grassmann codes are proper subcodes of codimension $2n$
of the orthogonal ones.
\end{abstract}
\noindent
{\bf Keywords:}
  Orthogonal Grassmannian, Projective Code, Minimum Distance
\par\noindent
{\bf MSC:}  51A50, 51E22, 51A45

\section{Introduction}\label{Introduction}
A projective code $\cC(\Omega)$ is an error correcting code determined by a projective system, that is a set $\Omega$
of $N$ distinct points of
a finite projective space.  More in detail, $\cC(\Omega)$ is a
linear code of length $N$ generated by the rows of a matrix $G$ whose columns are the coordinates
of the points of $\Omega$ with respect to some fixed reference
system. In general, $\cC(\Omega)$ is not uniquely determined by $\Omega$, but it turns out to
be unique up to monomial equivalence; as such its metric properties
with respect to Hamming's distance depend only on the set of points under consideration.
With a slight abuse of notation, which is however customary when dealing
with projective codes, we shall speak of $\cC(\Omega)$ as \emph{the} code defined
by $\Omega=\{ \omega_1,\omega_2,\dots, \omega_N\}$ where the $\omega_i$s  are fixed vector representations of the points of the projective system.

As mentioned above, the parameters $[N,K,d_{\min}]$ of $\cC(\Omega)$ depend only on  the pointset
$\Omega$; in particular, the length $N$ is the size of $\Omega$ and the dimension $K$
is the (vector) dimension of the subspace of $W$ spanned
by $\Omega$. It is straightforward to show that the minimum distance $d_{\min}$ is
\begin{equation}\label{min distance 1}
 d_{\min}=N-\max_{\Pi}|\Omega\cap\Pi|,
\end{equation}
as $\Pi$ ranges among all hyperplanes of the space $\PG(\langle\Omega\rangle)$;
 we refer to \cite{TVZ}
for further details.

The codes associated with polar $k$-Grassmannians of either orthogonal, symplectic or Hermitian
type have been  introduced  respectively in \cite{IL0}, \cite{IL2} and
\cite{IL3}.
In the case of line-Grassmannians, that is for $k=2,$ the following results are known:
in the symplectic case it has been shown in~\cite{IL2} that the minimum distance is
 $q^{4n-5}-q^{2n-3}$ for any $q$; in the orthogonal case it has been shown in \cite[Main Result 2]{IL0}
that the minimum distance is $d_{\min}=q^{3}-q^{2}$ for $n=2$ for any $q$ and in
\cite{ILP} that the minimum distance is $d_{\min}=q^{4n-5}-q^{3n-4}$ for $q$ odd.
Similar results hold in the Hermitian case, see \cite{IL3}.

The aim of the present paper is to determine the minimum distance of orthogonal
line-Grassmann codes for $q$ even. We are also able to determine the second
smallest distance in almost all cases.
Our  main result is the following.
\begin{mth}\label{main1}
For $q$ even,
  the minimum distance of a line orthogonal Grassmann code
  is
  \[ d_{\min}=q^{4n-5}-q^{3n-4} \]
  and  all words of minimum weight are projectively
  equivalent.
  Furthermore, the second smallest distance for $n\neq3$ is
  $q^{4n-5}-q^{2n-3}$.
\end{mth}

Using the aforementioned results of \cite{IL0,ILP} this leads to the following general result.

\begin{corollary}
  The parameters $[N,K,d_{\min}]$  of a line orthogonal Grassmann code are
 \[ N=\frac{(q^{2n}-1)(q^{2n-2}-1)}{(q-1)(q^2-1)},\quad K=\begin{cases} (2n+1)n & \mbox{$q$ odd} \\
(2n+1)n-1 & \mbox{$q$ even},\end{cases} \quad d_{\min}=q^{4n-5}-q^{3n-4}. \]
\end{corollary}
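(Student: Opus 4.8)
The plan is to establish the three parameters separately, since each is controlled by a different mechanism: the length $N$ is a pure enumeration of the pointset $\Omega$, the dimension $K=\dim\langle\Omega\rangle$ is the only place where the parity of $q$ intervenes, and the minimum distance $d_{\min}$ is assembled from the Main Theorem together with the cited results. With the Main Theorem already in hand, the genuine content of the corollary lies in $N$ and in $K$.

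For the length, I would count the totally singular lines of the parabolic orthogonal polar space $Q(2n,q)\subset\PG(2n,q)$, i.e.\ the points of $\Omega$. Using the standard enumeration---counting the singular points and, for each, the pencil of singular lines through it, or equivalently invoking the known Gaussian-coefficient formula for the number of totally singular $2$-spaces---one obtains a count that I would then simplify to the stated closed form $N=\frac{(q^{2n}-1)(q^{2n-2}-1)}{(q-1)(q^2-1)}$. This value is independent of the characteristic, as it records only the combinatorics of the polar space.

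For the dimension, write $\langle\Omega\rangle\subseteq\wedge^2 V$, where $V$ is the ambient $(2n+1)$-dimensional space carrying the quadratic form $Q$ with polarisation $B$; the parity of $q$ enters precisely through $B$. Every totally singular line $\langle u,v\rangle$ satisfies $B(u,v)=0$. When $q$ is even, $B$ is alternating (since $B(u,u)=2Q(u)=0$), so the map $u\wedge v\mapsto B(u,v)$ is a well-defined nonzero linear functional on $\wedge^2 V$ that vanishes on all of $\Omega$; this forces $\langle\Omega\rangle$ into a hyperplane, whence $K\le\binom{2n+1}{2}-1=(2n+1)n-1$, and I would then prove the bound tight by exhibiting enough independent image points $u\wedge v$. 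When $q$ is odd, $B$ is symmetric but not alternating ($B(u,u)=2Q(u)\ne 0$), so it induces no functional on $\wedge^2 V$, the line-Grassmannian spans everything, and $K=\binom{2n+1}{2}=(2n+1)n$, in agreement with \cite{IL0}. I expect the even-characteristic tightness---that the codimension is exactly $1$ and not larger---to be the only genuinely delicate point, and the structural facts needed for it are exactly those developed in the body of the paper en route to the Main Theorem, so they may be invoked here rather than reproved.

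Finally, the minimum distance is obtained by collecting known values. For $q$ even it is precisely the content of the Main Theorem. For $q$ odd it is \cite{ILP}, the case $n=2$ being also covered, for every $q$, by \cite[Main Result 2]{IL0}, where $q^3-q^2$ agrees with $q^{4n-5}-q^{3n-4}$ at $n=2$. Since the closed form is uniform in both $q$ and $n$, merging these cases gives $d_{\min}=q^{4n-5}-q^{3n-4}$ in all cases, completing the determination of the triple $[N,K,d_{\min}]$.
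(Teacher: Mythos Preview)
Your proposal is correct and matches the paper's approach: the Corollary is not proved independently but is assembled from prior results, with $N$ the known count of totally singular lines of $Q(2n,q)$, $K$ read off from Theorem~\ref{ipt} (specialising $k=2$ gives $\binom{2n+1}{2}$ for $q$ odd and $\binom{2n+1}{2}-\binom{2n+1}{0}$ for $q$ even), and $d_{\min}$ obtained by combining the Main Theorem with \cite{ILP} and \cite[Main Result~2]{IL0}. One small correction: the tightness of $K=(2n+1)n-1$ for $q$ even is \emph{not} established in the body of this paper en route to the Main Theorem, but is precisely the content of Theorem~\ref{ipt}, i.e.\ \cite{IP13}; the body of the paper only \emph{uses} that $\langle\varepsilon_2(\Delta_2)\rangle$ is the hyperplane $\ker\beta$, it does not reprove it, so you should cite \cite{IP13} rather than appeal to the present paper for that step.
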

Note that for $q$ odd and $n=2$, by \cite[Corollary 3.8]{ILP}, the minimum weight codewords lie on two orbits under the action of the linear automorphism group of the code.
\bigskip

The structure of the paper is as follows.
In Section~\ref{PRE} we set the notation and introduce some preliminary results. In particular,
in Section~\ref{pre 1} we recall some basic results on polar Grassmannians and their associated codes and in Section~\ref{pre 2} we describe in detail a fundamental formula for the computation of weights of codewords in a projective code.
In Section~\ref{sec3} we shall prove our Main Theorem.

For further details on the actual construction of orthogonal and symplectic line-Grassmann codes, we refer to~\cite{IL1}  where some efficient algorithms for encoding, decoding and error-correction have
been presented.

\section{Preliminaries}
\label{PRE}
\subsection{Grassmann and Polar Grassmann codes}\label{pre 1}
Let $V:=V(2n+1,q)$ be a vector space of odd dimension defined over  a finite field $\FF_q$ of order $q$ and denote by
$\cG_{k}$ the Grassmannian of the $k$--subspaces of $V.$

For any $k<\dim(V)$, let $\varepsilon_k:\cG_{k}\to \PG(\bigwedge^k V)$ be
the usual Pl\"ucker embedding,
 mapping a point $\langle v_1,\ldots,v_k\rangle$ of $\cG_k$ to the projective point $\langle v_1\wedge\cdots\wedge v_k\rangle$ of $\PG(\bigwedge^k V)$:
\[
\varepsilon_k:\langle v_1,\ldots,v_k\rangle\mapsto \langle v_1\wedge\cdots\wedge v_k\rangle.
\]

Let $\eta:V\to\FF_q$ be a fixed non-degenerate quadratic form over $V$ and
denote by $\Delta_k$ the {\it orthogonal Grassmannian} associated to $\eta$, that is $\Delta_k$ is the geometry whose points are the $k$--subspaces of $V$ which are
totally singular for $\eta$ and whose lines are defined as follows
\begin{itemize}
  \item  if $k<n$, then $\ell_{X,Y}:=\{Z\colon  X<Z<Y\colon \dim Z=k\}$, 
with $\dim X=k-1$, $\dim Y=k+1$ and $Y$ totally singular;
  \item  if  $k=n$, then $\ell_{X}:=\{Z\colon X<Z<X^{\perp_{\eta}}\colon \dim Z=n\}$, with $\dim X=n-1$, $Z$ totally singular and $X^{\perp_{\eta}}:=\{y\in V\colon \beta(x,y)=0 \textrm{  for all }x\in X\}$, where $\beta$ is the sesquilinearization of $\eta.$

\end{itemize}

For $k<n$, $\Delta_{k}$ is a proper subgeometry of $\cG_k$. In any case, for $k\leq n$ the point-set of
$\Delta_k$ is always a subset of that of $\cG_k$.

Put  $\varepsilon_k(\mathcal{G}_{k}):=\{\varepsilon_k(X)\colon X \text{ is a point of }\mathcal{G}_{k}\}$ and
$\varepsilon_k(\Delta_{k})=\{\varepsilon_k(Y)\colon Y \text{ is a point of }\Delta_{k}\}.$
Then, the above statement reads as
$\varepsilon_k(\Delta_{k})\subseteq\varepsilon_k(\cG_{k})\subseteq\PG(\bigwedge^k V)$.

We warn the reader that throughout the paper we will consider vectors and vector dimensions but we will adopt projective terminology.

\begin{theorem}[\cite{IP13}]
\label{ipt}
 Let $\varepsilon_k:\Delta_{k}\to\PG(\bigwedge^kV)$ be the
  restriction of the Pl\"ucker embedding to the orthogonal
  Grassmannian $\Delta_{k}$ and let $W_{k}:=\langle\varepsilon_k(\Delta_{k})\rangle$. Then,
           \[ \dim W_{k}=\begin{cases}
               {{2n+1}\choose{k}} & \text{ if $q$ is odd } \\
               \binom{2n+1}{k}-\binom{2n+1}{k-2} & \text{ if $q$ is even.}\\
           \end{cases}\]
  \end{theorem}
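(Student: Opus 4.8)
The plan is to realize $W_k$ as an $O(V)$-submodule of $\bigwedge^k V$ — the orthogonal group preserves $\eta$, hence permutes totally singular $k$-subspaces and acts linearly on their span — and to pin down $\dim W_k$ by sandwiching it between a matching upper and lower bound. The entire dichotomy between the two cases comes from a single fact about the polar bilinear form $\beta$ of $\eta$: since $\beta(x,x)=\eta(2x)-2\eta(x)$, in odd characteristic $\beta$ is a non-degenerate \emph{symmetric} form, whereas in even characteristic $\beta$ is \emph{alternating} and \emph{degenerate}, with a $1$-dimensional radical $\langle z\rangle$ (the nucleus of the quadric). I would treat the two cases by first extracting the upper bound from $\beta$ and then proving a spanning statement for the lower bound.

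For the upper bound in even characteristic, the alternating form $\beta$ determines an element $\beta^{*}\in\bigwedge^{2}V^{*}$ and hence a contraction $c_k\colon\bigwedge^{k}V\to\bigwedge^{k-2}V$. If $U=\langle v_1,\ldots,v_k\rangle$ is totally singular then $\beta(v_i,v_j)=0$ for all $i,j$, so $c_k(v_1\wedge\cdots\wedge v_k)=\sum_{i<j}\pm\beta(v_i,v_j)\,v_1\wedge\cdots\widehat{v_i}\cdots\widehat{v_j}\cdots\wedge v_k=0$; thus $\varepsilon_k(\Delta_k)\subseteq\ker c_k$ and $W_k\subseteq\ker c_k$. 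I would then compute $\dim\ker c_k=\binom{2n+1}{k}-\rank c_k$ by showing $c_k$ is surjective for $k\le n$. Splitting $V=V'\perp\langle z\rangle$ with $V'$ a $2n$-dimensional symplectic space and choosing a symplectic basis of $V'$, surjectivity follows from a triangularity argument: every basis monomial $\mu$ of degree $k-2$ uses fewer than $n$ indices, so some hyperbolic pair $e_i\wedge f_i$ is free and $c_k(\mu\wedge e_i\wedge f_i)=\pm\mu+(\text{terms containing a complete pair})$, which is characteristic-free and in particular valid in characteristic $2$, giving $\rank c_k=\binom{2n+1}{k-2}$. In odd characteristic the symmetric form $\beta$ does \emph{not} lie in $\bigwedge^2 V^*$, so it imposes no linear relation on Pl\"ucker coordinates and there is no analogous ambient constraint; the target is then the full space $\bigwedge^k V$.

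The lower bound is the heart of the argument: I must show that the totally singular subspaces already span all of $\ker c_k$ (even $q$), respectively all of $\bigwedge^k V$ (odd $q$). I would fix a hyperbolic basis adapted to $\eta$, start from the ``standard'' decomposable vectors $e_{i_1}\wedge\cdots\wedge e_{i_k}$ coming from subsets of a maximal totally singular subspace, and enlarge this family using the images under the root subgroups and orthogonal transvections of $O(V)$, all of which map $\varepsilon_k(\Delta_k)$ into itself. The cleanest organisation is an induction on the rank $n$: peeling off one hyperbolic pair $\langle e_n,f_n\rangle$ expresses $\bigwedge^k V$ in terms of exterior powers of a $(2n-1)$-dimensional orthogonal space of rank $n-1$, and one checks that the totally singular subspaces of $V$ project onto enough totally singular subspaces of the smaller space to propagate the spanning hypothesis; a parallel bookkeeping of the contraction kernels makes the counts match the claimed $\binom{2n+1}{k}-\binom{2n+1}{k-2}$ and $\binom{2n+1}{k}$.

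I expect the lower bound to be the main obstacle. Establishing $W_k\subseteq\ker c_k$ and computing $\dim\ker c_k$ is essentially formal, but proving the reverse inclusion — that there are \emph{no} linear relations among the Pl\"ucker coordinates of totally singular subspaces beyond the single contraction relation in even characteristic (and none at all in odd characteristic) — is delicate. Because we work in positive characteristic one cannot simply invoke semisimplicity or irreducibility of $\bigwedge^k V$ to conclude that the nonzero submodule $W_k$ must fill the whole kernel; instead the induction has to exhibit an explicit spanning family and verify the exact dimension count at each step, which is where the real work lies.
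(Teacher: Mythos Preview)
The paper does not prove this statement at all: Theorem~\ref{ipt} is quoted verbatim from \cite{IP13} (Cardinali--Pasini, \emph{Grassmann and Weyl embeddings of orthogonal Grassmannians}) and is used here purely as a preliminary input to read off the dimension $K$ of the code. There is therefore no proof in the present paper to compare your proposal against.

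That said, your outline is a reasonable sketch of how one would go about proving the result. The upper bound via the contraction $c_k$ by $\beta\in\bigwedge^2 V^*$ in even characteristic is exactly the right mechanism, and your surjectivity argument for $c_k$ is correct. For the lower bound, the strategy of generating a spanning set using the action of $O(V)$ (root subgroups, transvections) together with an induction on the Witt index is in the spirit of what \cite{IP13} actually does, though there the argument is organised around Weyl modules and the comparison of the Grassmann embedding with the Weyl embedding rather than a bare hands spanning computation. Your own diagnosis is accurate: the spanning step is where the work is, and in characteristic~$2$ one cannot shortcut it by semisimplicity. If you want to see the details, consult \cite{IP13} directly; this paper simply imports the conclusion.
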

For more information on embeddings of orthogonal line-Grassmannians we refer to~\cite{IP14}.

The image $\varepsilon_k(\cG_k)$ is a projective system in $\PG(\bigwedge^kV)$ and the projective code $\cC(\cG_k)$ is called \emph{ $k$-Grassmann code}. Grassmann codes have been introduced in \cite{R1,R2} as generalizations of first order Reed--Muller codes and they have been extensively investigated ever since;
see \cite{Gl0,Go,No,R1,R2}. Their parameters, as well as some of their higher weights, have been fully
determined in \cite{No}.

The set $\Omega:=\varepsilon_k(\Delta_{k})$ is a projective system of $\PG(W_k)\subseteq\PG(\bigwedge^kV)$, hence it is natural to consider the projective code $\cP_{n,k}:=\cC(\Omega)$ arising from  $\Omega.$
The codes $\cP_{n,k}:=\cC(\Omega)$ are called \emph{orthogonal $k$-Grassmann codes} and they were introduced in \cite{IL0}.
Theorem~\ref{ipt} immediately provides the
length $N$ as the number of points of $\Delta_k$ and the dimension $K=\dim W_{k}$ of $\cP_{n,k}$.
A more difficult task is to determine the minimum distance of an orthogonal Grassmann code. In~\cite{IL0} we obtained the exact value of $d_{\min}$ for $n=k=2$ and $n=k=3$; more
recently, in \cite{ILP}, it has been shown that for $q$ odd and $k=2$ the minimum distance of
$\cP_{n,2}$ is $q^{4n-5}-q^{3n-4}$.
\par
We now present in detail a geometric setting in which it is possible to study the weights of a
projective code arising from the image under the Pl\"ucker embedding $\varepsilon_k$ of an arbitrary set of $k$-subspaces.

For any vector space $U$, denote by $U^*$ its dual.
It is well known that
$(\bigwedge^kV)^*\cong\bigwedge^kV^*$. Suppose
$\Omega=\{\omega_1,\ldots,\omega_N\}\subseteq\varepsilon_k(\cG_k)$
to be a projective system of $\bigwedge^kV$ and take $W:=\langle\Omega\rangle$.
Let now
\[ \cN(\Omega):=\{ \varphi\in\bigwedge^kV^*\colon \varphi|_{\Omega}\equiv 0 \} \]
be the annihilator of the set $\Omega$; clearly $\cN(\Omega)=\cN(W)$.
There exists a correspondence between the elements of
 $(\bigwedge^kV^*)/\cN(\Omega)\cong W^*$ and the codewords of $\cC(\Omega)$.
More precisely, given any $\varphi\in W^*,$
the codeword $c_{\varphi}$ corresponding to $\varphi$ is defined as
\[ c_{\varphi}:=(\varphi(\omega_1),\ldots,\varphi(\omega_N)). \]
As $\Omega$ spans $W$ it is immediate to see that $c_{\varphi}=c_{\psi}$ if and only if
$\varphi-\psi\in\cN(\Omega)$, that is $\varphi=\psi$ as elements of $W^*$.

Define the weight $\wt(\varphi)$ of $\varphi$ to be the weight of $c_{\varphi}$, that is
\[ \wt(\varphi):=\wt(c_{\varphi})=|\{ \omega\in\Omega \colon \varphi(\omega)\neq 0 \}|. \]

It is well known that linear functionals in $\bigwedge^kV^*$ are equivalent
to $k$-linear alternating forms defined on $V$.
In particular, given $\varphi\in\bigwedge^kV^*$ we can define $\varphi^*\colon V^k\to\FF_q$ as
\[ \varphi^*(v_1,\ldots,v_k):=\varphi(v_1\wedge v_2\wedge\cdots\wedge v_k) \]
which is a $k$-linear alternating form. Conversely, given a $k$-linear alternating
form $\varphi^*\colon V^k\to\FF_q$, there is a unique element $\varphi\in\bigwedge^kV^*$
such that
\[ \varphi(v_1\wedge\ldots\wedge v_k):=\varphi^*(v_1,\ldots,v_k) \]
for any $v_1,\ldots,v_k\in V$.
In particular, $\varphi(u)=0$ for $u=\langle v_1\wedge v_2\wedge\cdots\wedge v_k\rangle\in\Omega$ if and only if
all the $k$-tuples of elements of the vector space $U:=\langle v_1,\ldots,v_k\rangle$ are killed by
$\varphi^*$.
With a slight abuse of notation, in the remainder of this paper
we shall use the same symbol $\varphi$ for both the linear functional and the
related $k$-alternating form.


For linear codes the minimum distance is the minimum of the weights of the non-zero codewords;
so, in order to obtain the minimum distance of the codes $\cP_{n,k}$ we need to determine the
maximum number of $k$--spaces of $V$ with are both totally $\eta$--singular
 and $\varphi$--totally isotropic,
where  $\varphi$ is an arbitrary $k$--linear alternating form which is not identically null
on the elements of $\Delta_k$.

\subsection{A recursive formula for the weights}\label{pre 2}
Take $\varphi\in(\bigwedge^kV)^*$ and let $u\in V$.
Then, we can define a
functional $\varphi_u\in(\bigwedge^{k-1}V)^*$ by
\[ \varphi_u:\begin{cases}
\bigwedge^{k-1}V \rightarrow \F_q\\
x\mapsto \varphi (u\wedge x).
  \end{cases} \]
Define $u\bigwedge^{k-2}V:=\{u\wedge y: y\in\bigwedge^{k-2} V\}\subseteqq \bigwedge^{k-1} V$ and $V_u:=V/\langle u\rangle$.
Observe that for any $y\in u\bigwedge^{k-2}V$ we have $\varphi_u(y)=0$.
Since
$\bigwedge^{k-1}V_u= (\bigwedge^{k-1}V)/(u\bigwedge^{k-2}V)$, for $x\in \bigwedge^{k-1}V$, we can define a functional $\widehat{\varphi}_u\in (\bigwedge^{k-1}V_u)^*$ by
\[ \widehat{\varphi}_u:\begin{cases}
    \bigwedge^{k-1}V_u \to \FF_q \\
    x+(u\bigwedge^{k-2}V)\mapsto\varphi_u(x).
  \end{cases} \]

Let now $\cT$ be a set of $k$-subspaces of $V$ and denote by $\TT\subseteq V$ the set of vectors of $V$ belonging to at least one element of $\cT.$ Put $\varepsilon_k(\cT)=\{\varepsilon_k(X)\colon X\in \cT\}.$ Define
\[ \cT_u:=\{ X/\langle u\rangle: X\in\cT, u\in X\}\subseteq V_u\,\,{\textrm{and} }\,\,S_u:=\langle\cT_u\rangle.\]
Apply the Pl\"ucker embedding $\varepsilon_{k-1}:\cG_{k-1}\to\PG(\bigwedge^{k-1}V_u)$
to the elements $X/\langle u\rangle\in \cT_u$. Put $\Omega_u:=\{\varepsilon_{k-1}(X/\langle u\rangle)\colon X/\langle u\rangle \in \cT_u\}.$ 
The set $\Omega_u$ can be regarded as a projective system of $\bigwedge^{k-1}S_u.$
We can consider the linear functional
 $\widetilde{\varphi}_u:=\widehat{\varphi}_u|_{\bigwedge^{k-1}S_u}$ which is the restriction of $\widehat{\varphi}_u$ to $\bigwedge^{k-1}S_u.$
As recalled in Section \ref{pre 1}, to each codeword
of $\cC(\Omega_u)$ there correspond exactly one functional
$\widetilde{\varphi}_u'\in(\bigwedge^{k-1}S_u^*)/\widetilde{\cN}(\Omega_u)$ where
\[ \widetilde{\cN}(\Omega_u):=\{ \psi\in\bigwedge^{k-1}S_u^*\colon \psi|_{\Omega_u}\equiv 0 \}. \]
Observe that, given $\widetilde{\varphi}_u'\in(\bigwedge^{k-1}S_u^*)/\widetilde{\cN}(\Omega_u)$, there exists a functional $\widetilde {\varphi}_u\in(\bigwedge^{k-1}S_u^*)$ such that  $\wt(\widetilde{\varphi}_u)=\wt(\widetilde{\varphi}_u').$

Under the set-up introduced above, the following formula holds (see \cite[Lemma 2.2]{ILP}):
\begin{equation}
\label{ka}
 \wt(\varphi)=  \frac{1}{q^k-1}\sum_{\begin{subarray}{c}
    u\in \TT\\
    \end{subarray}}\wt(\widetilde{\varphi}_u).
\end{equation}

Note that when $\cT$ is the  set of all $k$-subspaces of $V$, namely $\cT$ is $\cG_k$, then $\TT$ is the pointset of $V,$ hence $S_u=V_u,$  $\bigwedge^{k-1}(S_u/\langle u\rangle)=\bigwedge^{k-1}V_u$  and  $\widetilde{\varphi}_u=\widehat{\varphi}_u=\widetilde{\varphi}_u'.$

Observe also that if $\Omega_u$ spans $\bigwedge^{k-1}S_u$, then $\widetilde{\cN}(\Omega_u)$ is trivial and $\widetilde{\varphi}_u'=\widetilde{\varphi}_u.$
This happens, for example, in the case of orthogonal Grassmann codes for $q$ odd or $k=2$. Indeed, if we specialize to the case of line orthogonal Grassmann codes (i.e. $k=2$ and $\cT=\Delta_2$), we have that  $\TT$ is the pointset of the non-degenerate parabolic quadric $\cQ\cong Q(2n,q)$ defined by the quadratic form $\eta$. So
 $\cT_u$, with $u\in \cQ$, is isomorphic to the non-degenerate  parabolic quadric $\cQ_u$ having as points, the lines of $\cQ$ through $u.$ In this case $\cT_u\cong \cQ_u$ is naturally embedded (by $\varepsilon_1$) as a non-degenerate parabolic quadric $\Omega_u\cong Q(2n-2,q)$ in a $(2n-1)$-dimensional vector space $S_u$. Hence $\Omega_u$ spans $S_u$ and $\widetilde{\cN}(\Omega_u)=\{0\}$.

\section{Proof of the Main Theorem}\label{sec3}
If $q=2$ and $n=2$, \cite[Main Result 2]{IL0} shows that the minimum distance of
the code is $4=2^3-2^2$. A direct computation proves that all $45$ words of
minimum weight lie in the same orbit under the action of the automorphism group of
the code, which is isomorphic to the orthogonal linear group $GO(5,2)$ in its natural action on $\bigwedge^2V$.

Henceforth,  we shall assume $q$ to be even and $(q,n)\neq(2,2)$.
As mentioned in the Introduction, the case $q$ even and
$n=2$ is also covered by \cite[Main result 2]{IL0}.

As $\dim V$ is odd, all non--degenerate quadratic forms on $V$ are projectively
equivalent. So, for the purposes of the present paper, we can assume to have
 fixed a basis
$B:=(e_1,\ldots,e_{2n+1})$ of $V$ such that $\eta$ is
\begin{equation}
\label{e:eta}
 \eta(x):=\sum_{i=1}^n x_{2i-1}x_{2i}+x_{2n+1}^2,
\end{equation}
where $(x_i)_{i=1}^{2n+1}$ are the coordinates of a vector $x\in V$ with respect to $B.$

Let $\beta(x,y):=\eta(x+y)-\eta(x)-\eta(y)$ be the bilinear form associated
with $\eta$ by sesquilinearization. As $q$ is even, the bilinear form $\beta$ is degenerate with
$1$-dimensional radical $N=\{x\in V\colon \beta(x,y)=0 \text{ for any } y\in V\}=\langle e_{2n+1}\rangle$.

The set of totally singular vectors for $\eta$ determine a parabolic quadric $\cQ\cong Q(2n,q)$ in
$\PG(V)$.
We recall that when $p$ and $q$ are distinct points of $\cQ$, the line spanned by $p$ and $q$ is totally singular if and only if
\[ \eta(p)=\eta(q)=\beta(p,q)=0. \]
For any $p\in\cQ,$ define
\[ p^{\perp_\cQ}:=\langle u\in\cQ\colon \langle p,u\rangle\subseteq\cQ\rangle. \]
So $p^{\perp_\cQ}$ is the tangent hyperplane at $p$ to $\cQ$.
By construction, we also have $p^{\perp_\cQ}=\{ u\in V\colon \beta(p,u)=0 \}$; thus $N\subseteq p^{\perp_\cQ}$
for any $p\in\cQ$ and any line through $N$ is tangent to $\cQ$. The point $N$
is called the \emph{nucleus} of the quadric $\cQ$.

By Theorem~\ref{ipt}, $\Sigma:=\PG(\langle\varepsilon_2(\Delta_{2})\rangle)$ is a hyperplane of $\PG(\bigwedge^2 V)$; more in detail, $\Sigma$ is the kernel of the functional $\beta$
arising from the alternating bilinear form introduced above.

 If $\Pi$ is a hyperplane of $\PG(\bigwedge^2 V)$ different form $\Sigma$, then  $\Pi_{\Sigma}:=\Pi\cap \Sigma$ is a hyperplane of $\Sigma$; clearly, every hyperplane of $\Sigma$ can be obtained by intersecting $\Sigma$ with suitable hyperplanes of $\PG(\bigwedge^2 V).$

So, by Equation~\eqref{min distance 1},
  $d_{\min}=N-\max_{\Pi}| \varepsilon_2(\Delta_{2})\cap\Pi_{\Sigma}|,$ where $\Pi$ ranges among all hyperplanes of $\PG(\bigwedge^2 V)$ different from $\Sigma.$ Regarding $\Pi$ as (the kernel of) a linear functional
$\varphi_{\Pi}\in \bigwedge^2 V^*$
we see that the cardinality of  $ \varepsilon_2(\Delta_{2})\cap\Pi_{\Sigma}$ is the same as the number of lines of $V$ which are simultaneously totally singular for $\eta$ and totally isotropic for $\varphi_{\Pi},$ now
considered as a bilinear alternating form on $V\times V$.
Observe that by the correspondence between codewords of $\cP_{n,2}$ and elements of
$\bigwedge^2V^*/\cN(\varepsilon_2(\Delta_2))$ explained in Section~\ref{pre 1}, two functionals $\varphi,\vartheta\in\bigwedge^2V^*$
induce the same codeword $c\in\cP_{n,2}$ if and only if $\varphi-\vartheta=a\beta$, for some $a\in\FF_q$.

As $\dim(V)$ is odd, the bilinear form $\varphi_\Pi$ is always degenerate; denote by $\R$ its radical, i.e. $\R:=\{x\in V\colon \varphi_\Pi(x,y)=0\,\forall y\in V\}$.

\medskip
We are now ready to prove our main theorem. We  proceed in several stages.
 First of all we consider in Section~\ref{sec3.1} those hyperplanes $\Pi$ corresponding to alternating bilinear forms having radical $\R$ containing the nucleus $N$ of $\cQ.$
We prove in Lemma~\ref{lemma N in R} that the weight of these forms
is always at least $q^{4n-5}-q^{2n-3}$, thus showing that they cannot have minimum weight.
Then,
in Section~\ref{sec3.2} we deal with the class of hyperplanes corresponding to alternating bilinear forms having radical not containing the nucleus $N$ of $\cQ$. We show that a necessary condition
for the forms to correspond to minimal weight codewords is to have radical of maximum dimension (see Theorem~\ref{min weight}). Finally, we characterize the codewords of minimum weight.

\subsection{Weight of $\varphi_{\Pi}$ when $N\subseteq \R$}\label{sec3.1}
Suppose $\Pi$ to be a hyperplane of $\PG(\bigwedge^2 V)$ corresponding to a bilinear alternating form $\varphi_\Pi$ whose radical $\R$ contains $N.$
For the sake of simplicity, we will write $\varphi$ instead of $\varphi_{\Pi}.$

Denote by $\cW$ the non-degenerate symplectic polar space in $V/N$ having as points
the lines of $V$ through $N$ and as lines the planes of $V$ through $N$ containing a line of $\cQ.$
It is immediate to see that $\cW$ is defined by the non-degenerate alternating form ${\beta}^{\textrm{sp}}$ induced by $\beta$ on $V/N.$

The projection $\iota$ from $V$ to $V/N$ induces an isomorphism of polar spaces from $\cQ$ to $\cW.$
So, $\iota$ naturally induces an isomorphism $\overline{{\iota}}$ between the orthogonal line-Grassmannian $\Delta_2$
and the symplectic line-Grassmannian ${\Delta}_2^{\textrm{sp}}$ associated with ${\beta}^{\textrm{sp}}$  by  $\overline{{\iota}}(\ell):=\langle\ell,N\rangle/N$ for any $\ell\in\Delta_2$.
Denote by ${\cP}_{n,2}^{\textrm{sp}}$ the symplectic Grassmann code defined by ${\beta}^{\textrm{sp}}$ on $V/N$. (See \cite{IL2} for more information on symplectic Grassmann codes.)

\begin{lemma}\label{lemma N in R}
If $N\subseteq\Rad$ then $\wt(\varphi)\geq q^{4n-5}-q^{2n-3}.$
\end{lemma}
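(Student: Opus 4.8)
The key observation is that when $N\subseteq\R$, the form $\varphi$ descends to a well-defined alternating form on the quotient $V/N$, since $\varphi(x,y)$ depends only on the cosets $x+N$, $y+N$. Under the isomorphism $\overline{\iota}$ between the orthogonal line-Grassmannian $\Delta_2$ and the symplectic line-Grassmannian $\Delta_2^{\textrm{sp}}$, a line $\ell\in\Delta_2$ is totally isotropic for $\varphi$ if and only if its image $\overline{\iota}(\ell)$ is totally isotropic for the induced form on $V/N$. The plan is to exploit this correspondence to reduce the weight computation for $\varphi$ on $\cP_{n,2}$ to a weight computation for the induced form on the symplectic code $\cP_{n,2}^{\textrm{sp}}$.

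First I would verify that $\overline{\iota}$ gives a bijection between the set of $\eta$-singular lines that are $\varphi$-totally isotropic and the set of lines of the symplectic space $\cW$ that are totally isotropic for the descended form. This identifies the weight $\wt(\varphi)$ (computed as $N-\max|\varepsilon_2(\Delta_2)\cap\Pi_\Sigma|$, i.e. the number of $\eta$-singular, $\varphi$-isotropic lines) with the corresponding weight of a codeword of the symplectic code $\cP_{n,2}^{\textrm{sp}}$. The mild subtlety is bookkeeping: the isomorphism $\overline{\iota}$ sends $\ell$ to $\langle\ell,N\rangle/N$, so one must check that distinct singular lines of $\cQ$ map to distinct symplectic lines (they do, since a singular line $\ell$ never contains $N$ as $N$ is the nucleus and any line through $N$ is merely tangent, not contained in $\cQ$), and that $\varphi$-isotropy is preserved in both directions under this identification.

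Once the weight is transferred to the symplectic setting, I would invoke the known minimum distance of symplectic line-Grassmann codes: by the result recalled in the Introduction (from~\cite{IL2}), the minimum distance of $\cP_{n,2}^{\textrm{sp}}$ is $q^{4n-5}-q^{2n-3}$. Since $\varphi$ is not identically zero on $\Delta_2$ (the hyperplane $\Pi\neq\Sigma$), the descended form is not identically zero on $\Delta_2^{\textrm{sp}}$, hence corresponds to a \emph{nonzero} codeword of $\cP_{n,2}^{\textrm{sp}}$, whose weight is therefore at least the minimum distance $q^{4n-5}-q^{2n-3}$. This immediately yields $\wt(\varphi)\geq q^{4n-5}-q^{2n-3}$.

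The main obstacle I anticipate is ensuring that the weight is transferred \emph{exactly} rather than merely up to some multiplicity. Because the symplectic space $V/N$ has even dimension $2n$ while the ambient orthogonal space has odd dimension $2n+1$, and because the quotient map collapses the nucleus, I must confirm that the map from singular $\varphi$-isotropic lines of $\cQ$ to isotropic lines of $\cW$ is one-to-one and onto, so that no counting factor is introduced. The fact that $\iota$ induces an \emph{isomorphism of polar spaces} from $\cQ$ to $\cW$ (as asserted before the lemma) is precisely what guarantees this, so the argument should go through cleanly once that isomorphism is used to match up both the line-sets and the isotropy conditions.
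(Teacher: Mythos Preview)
Your approach is correct and matches the paper's proof: descend $\varphi$ to $V/N$ via $N\subseteq\Rad$, use the polar-space isomorphism $\iota$ (and the induced $\overline{\iota}$) to identify $\wt(\varphi)$ with the weight of the induced form $\varphi^{\textrm{sp}}$ in $\cP_{n,2}^{\textrm{sp}}$, and then invoke the minimum distance $q^{4n-5}-q^{2n-3}$ from~\cite{IL2}. One minor slip in your parenthetical: $\wt(\varphi)$ is the number of $\eta$-singular lines that are \emph{not} $\varphi$-isotropic (not the number that are isotropic), but since $\overline{\iota}$ is a bijection on the full line-sets of the two Grassmannians this does not affect the argument.
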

\begin{proof}
Since $N\subseteq\Rad$, we can consider the (possibly degenerate) symplectic polar space $\cW_{\varphi}$ of $V/N$ having as points the lines of $V$ (totally isotropic for $\varphi$) through $N$ and as lines the planes of $V$ through $N$ which are totally isotropic for $\varphi.$ Denote by ${\varphi}^{\textrm{sp}}$ the bilinear alternating form of $V/N$ defining $\cW_{\varphi}$. Clearly, ${\varphi}^{\textrm{sp}}$ is the form induced by
 $\varphi$  on $V/N.$
Since $N\subseteq\Rad$, given any line $\ell$ of $V$ with $N\notin\ell$ which is totally isotropic for $\varphi$, the plane $\langle N,\ell\rangle$ is also totally isotropic for $\varphi$. So $\overline{{\iota}}(\ell)$ is
a totally isotropic line for ${\varphi}^{\textrm{sp}}$.
Conversely, if $\langle \ell,N\rangle$ is a totally isotropic line for ${\varphi}^{\textrm{sp}}$, then
$\ell$ is totally isotropic for $\varphi$.
So, the projection $\iota\colon V\rightarrow V/N$ induces a bijection between the set of lines of $V$
which are simultaneously  totally singular for $\eta$ and totally isotropic for $\varphi$
and the set of lines of $V/N$ which are simultaneously totally isotropic for ${\beta}^{\textrm{sp}}$ and totally isotropic for ${\varphi}^{\textrm{sp}}.$
So $\wt(\varphi)=\wt({\varphi}^{\textrm{sp}})$, where $\wt(\varphi)$ is intended as the weight of
the codeword induced by $\varphi$ in the orthogonal line-Grassmann code ${\cP}_{n,2}$, while $\wt({\varphi}^{\textrm{sp}})$
corresponds to the weight of a codeword $c_{{\varphi}^{\textrm{sp}}}$ in the symplectic line-Grassmann code
${\cP}_{n,2}^{\textrm{sp}}$.
By the Main Theorem of \cite{IL2}, $\wt({\varphi}^{\textrm{sp}})\geq q^{4n-5}-q^{2n-3}$.
This completes the lemma.
\end{proof}
Observe that the isomorphism $\overline{{\iota}}$ is not an isomorphism between the codes $\cP_{n,2}$ and
${\cP}_{n,2}^{\textrm{sp}}$ induced by the projective systems arising from the embedding of the
respective polar Grassmannians; in particular, we see that
for $q$ even ${\cP}_{n,2}^{\textrm{sp}}$ is a proper subcode of $\cP_{n,2}$ with codimension $2n$.

\subsection{Weight of $\varphi_{\Pi}$ when $N\not\subseteq\R$}\label{sec3.2}
Suppose that $\Pi$ is a hyperplane of $\PG(\bigwedge^2 V)$ corresponding to a bilinear alternating form $\varphi_\Pi$ whose radical $\R$ does not contain $N.$
As in Section~\ref{sec3.1}, we shall write $\varphi$ instead of $\varphi_{\Pi}.$
It is not possible to proceed now as in Section~\ref{sec3.1} since, under the hypothesis $N\not\subseteq \Rad$, the form $\varphi$ does not induce any
symplectic polar space in $V/N$, as
 there are some  lines of $V$ through $N$ which are not totally isotropic for $\varphi$.

We will rely on Equation~\eqref{ka} adapted to the special case $\Omega=\varepsilon_2(\Delta_2)$.
For the convenience of the reader we write explicitly Equation~\eqref{ka} in this case:
for any  $\varphi\in\bigwedge^2V$,
\begin{equation}
\label{ka-1}
 \wt(\varphi)=  \frac{1}{q^2-1}\sum_{\begin{subarray}{c}
    u\in \cQ\\
    \end{subarray}}\wt(\widetilde{\varphi}_u)
\end{equation}
where $\widetilde{\varphi}_u:(u^{\perp_{\cQ}}/\langle u\rangle) \to \FF_q,\,\widetilde{\varphi}_u(x+\langle u\rangle):=\varphi(u,x)$ with $x\in u^{\perp_{\cQ}}$ and $u\in \cQ.$
Observe that
the vector space $u^{\perp_{\cQ}}/\langle u\rangle$  is naturally  endowed with the quadratic
form $\eta_u:x+\langle u\rangle\to\eta(x)$ and  $\dim u^{\perp_{\cQ}}/\langle u\rangle=2n-1.$  It is well known that
the set of all totally singular points
for $\eta_u$ is a parabolic quadric $\cQ_u\cong Q(2n-2,q)$ of rank $n-1$ in $u^{\perp_{\cQ}}/\langle u\rangle.$
In particular, the possible non-zero weights of $\widetilde{\varphi}_u$ correspond to the  non-trivial
hyperplane sections of $\cQ_u$. So, the following lemma is straightforward.
\begin{lemma}\label{pesi residuo}
  Either $\widetilde{\varphi}_u=0$ or
  $\wt(\widetilde{\varphi}_u)\in\{q^{2n-3}-q^{n-2},q^{2n-3},q^{2n-3}+q^{n-2}\}$.
\end{lemma}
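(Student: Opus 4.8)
The plan is to reduce the statement to a well-understood classical fact: the possible sizes of hyperplane sections of a non-degenerate parabolic quadric $\cQ_u\cong Q(2n-2,q)$. Since $\widetilde{\varphi}_u$ is a linear functional on $u^{\perp_{\cQ}}/\langle u\rangle$, its weight counts the totally singular points of $\eta_u$ lying \emph{outside} the kernel of $\widetilde{\varphi}_u$; equivalently, $\wt(\widetilde{\varphi}_u)=|\cQ_u|-|\cQ_u\cap H|$, where $H$ is the hyperplane $\ker\widetilde{\varphi}_u$ of $u^{\perp_{\cQ}}/\langle u\rangle$ (when $\widetilde{\varphi}_u\neq 0$; if $\widetilde{\varphi}_u=0$ the weight is $0$, which is the first alternative). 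So the entire lemma is equivalent to determining the three possible values of $|\cQ_u\cap H|$ as $H$ ranges over the hyperplanes of the $(2n-1)$-dimensional ambient space.

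First I would recall the standard cardinalities: the parabolic quadric $\cQ_u=Q(2n-2,q)$ has $|\cQ_u|=q^{2n-3}+q^{2n-4}+\cdots+q+1=\frac{q^{2n-2}-1}{q-1}$ points. A hyperplane $H$ meets $\cQ_u$ in a quadric of the ambient $(2n-2)$-dimensional projective space, and the classical trichotomy of quadric sections tells us that $\cQ_u\cap H$ is one of exactly three types: a parabolic quadric $Q(2n-3,q)$ (the tangent-hyperplane case, i.e. $H$ meets $\cQ_u$ in a cone over a lower parabolic), a hyperbolic quadric $Q^+(2n-3,q)$, or an elliptic quadric $Q^-(2n-3,q)$. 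I would quote the point-counts $|Q^{\pm}(2n-3,q)|=\frac{(q^{n-1}\mp 1)(q^{n-2}\pm 1)}{q-1}+\cdots$; more usefully, the three section sizes are $\frac{q^{2n-3}-1}{q-1}$ together with the two values $\frac{q^{2n-3}-1}{q-1}\pm q^{n-2}$ shifted by the degenerate (tangent) term. Subtracting each from $|\cQ_u|$ yields exactly $q^{2n-3}$, $q^{2n-3}+q^{n-2}$, and $q^{2n-3}-q^{n-2}$, which are precisely the three claimed weights.

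Concretely, the bookkeeping I would do is: compute $|\cQ_u|-|Q(2n-3,q)|$ for the tangent case, which gives the middle value $q^{2n-3}$; then compute $|\cQ_u|-|Q^+(2n-3,q)|$ and $|\cQ_u|-|Q^-(2n-3,q)|$ for the non-tangent cases, obtaining $q^{2n-3}\mp q^{n-2}$. The arithmetic is routine once the standard formulas for $|Q^{\varepsilon}(m,q)|$ are in hand, and the sign pattern (hyperbolic sections being larger, elliptic smaller) pins down which of $q^{2n-3}\pm q^{n-2}$ arises in each case.

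I do not anticipate a genuine obstacle here; the lemma is indeed \textquotedblleft straightforward\textquotedblright\ as the authors assert, being a direct translation of the textbook classification of hyperplane sections of a parabolic quadric. The only point requiring mild care is confirming that \emph{every} one of the three section types is actually realizable as $\ker\widetilde{\varphi}_u$ for some admissible $\varphi$ — but the statement is phrased disjunctively (\textquotedblleft $\wt(\widetilde{\varphi}_u)\in\{\ldots\}$\textquotedblright), so I only need the inclusion, namely that no other value can occur, which follows immediately from the trichotomy. The main thing to get right is simply the correct quadric point-count formulas and the subtraction, so that the three residues match the displayed set exactly.
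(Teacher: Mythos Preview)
Your proposal is correct and follows exactly the approach the paper implicitly invokes: the paper states just before the lemma that ``the possible non-zero weights of $\widetilde{\varphi}_u$ correspond to the non-trivial hyperplane sections of $\cQ_u$'' and then calls the lemma straightforward, giving no further proof. The only cosmetic slip is your labeling of the tangent section as ``$Q(2n-3,q)$'' (it is really a cone $pQ(2n-4,q)$, as you note in parentheses), but its point-count $\frac{q^{2n-3}-1}{q-1}$ is the one you use, so the arithmetic is fine.
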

Define
\[\begin{array}{l@{:=\,}l@{\qquad}l@{\,:=\,}l}
     \fA'& \{u\colon u\in\cQ\,\,\text{and}\,\, \widetilde{\varphi}_u\not=0 \}, & A'&|\fA'|\smash{;} \\
    \fB & \{u\colon u\in\fA'\,\,\text{and}\,\,  \wt(\widetilde{\varphi}_u)=q^{2n-3} \},
       & B&|\fB|\smash{;} \\
     \fC & \{u\colon u\in\fA'\,\,\text{and}\,\, \wt(\widetilde{\varphi}_u)=q^{2n-3}+q^{n-2}\}, &C&|\fC|\smash{.}
   \end{array}
\]
By definition, both $\fB$ and $\fC$ are subsets of $\fA'$ and $B, C\geq 0$.
Using Equation~\eqref{ka-1} and Lemma~\ref{pesi residuo} we can write
\begin{equation}
  \label{eq peso}
  \wt(\varphi)=\frac{q^{2n-3}-q^{n-2}}{q^2-1}A'+\frac{q^{n-2}}{q^2-1}B+
  \frac{2q^{n-2}}{q^2-1}C.
\end{equation}
For any $u\in V,$ write $u^{\perp_{\varphi}}:=\{x\in V\colon\varphi(u,x)=0\}\subseteq V.$

\begin{lemma}\label{lemma equiv}
Let $u\in \cQ.$ Then $\widetilde{\varphi}_u=0\Leftrightarrow u^{\perp_{\cQ}}\subseteq u^{\perp_{\varphi}}.$
\end{lemma}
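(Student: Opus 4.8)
The plan is to observe that both sides of the claimed equivalence unwind to the single condition ``$\varphi(u,x)=0$ for every $x\in u^{\perp_{\cQ}}$'', so that the proof reduces to a direct chase through the definitions, with no genuine obstacle.

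First I would recall that $\widetilde{\varphi}_u$ is the functional on the quotient $u^{\perp_{\cQ}}/\langle u\rangle$ given by $\widetilde{\varphi}_u(x+\langle u\rangle)=\varphi(u,x)$. Since $\varphi$ is alternating we have $\varphi(u,u)=0$, so this prescription is independent of the coset representative and $\widetilde{\varphi}_u$ is well defined; moreover $u\in u^{\perp_{\cQ}}$, because $\beta(u,u)=0$ in even characteristic. Consequently $\widetilde{\varphi}_u$ vanishes identically precisely when $\varphi(u,x)=0$ for every $x\in u^{\perp_{\cQ}}$, as every element of the quotient has such a representative.

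For the forward implication, assuming $\widetilde{\varphi}_u=0$, I would take an arbitrary $x\in u^{\perp_{\cQ}}$, note that $\varphi(u,x)=\widetilde{\varphi}_u(x+\langle u\rangle)=0$, and conclude $x\in u^{\perp_{\varphi}}$ directly from the definition $u^{\perp_{\varphi}}=\{x\in V\colon \varphi(u,x)=0\}$; this yields $u^{\perp_{\cQ}}\subseteq u^{\perp_{\varphi}}$. For the converse, assuming $u^{\perp_{\cQ}}\subseteq u^{\perp_{\varphi}}$, any representative $x\in u^{\perp_{\cQ}}$ lies in $u^{\perp_{\varphi}}$, so $\varphi(u,x)=0$ and hence $\widetilde{\varphi}_u(x+\langle u\rangle)=0$; since this holds for every coset, $\widetilde{\varphi}_u=0$.

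Because the argument is purely formal, there is no substantial difficulty. The only point deserving a word of care is the coset-independence noted above: it is exactly the vanishing $\varphi(u,u)=0$ that makes the domain of $\widetilde{\varphi}_u$ match $u^{\perp_{\cQ}}$ and the two conditions literally coincide.
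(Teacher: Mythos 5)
Your proposal is correct and follows essentially the same route as the paper: both directions are an immediate unwinding of the definition $\widetilde{\varphi}_u(x+\langle u\rangle)=\varphi(u,x)$ together with the definitions of $u^{\perp_{\cQ}}$ and $u^{\perp_{\varphi}}$. The only difference is that you additionally spell out the coset-independence of $\widetilde{\varphi}_u$, which the paper takes for granted; this is a harmless (and correct) extra check.
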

\begin{proof}
Let $x\in u^{\perp_{\cQ}}$ and suppose $u^{\perp_{\cQ}}\subseteq u^{\perp_{\varphi}}.$ Then $\varphi(u,x)=0.$ So, by definition of ${\varphi}_u$,  ${\varphi}_u(x)=0$  for any $x\in u^{\perp_{\cQ}}$, whence $\widetilde{\varphi}_u=0$.
Conversely, if $\widetilde{\varphi}_u$ is identically zero, then
$\varphi_u(x)=\varphi(u,x)=0 \,\forall x\in u^{\perp_{\cQ}}$;
so $u^{\perp_\cQ}\subseteq u^{\perp_{\varphi}}.$
\end{proof}
Put
\[ S:=|\{u\in\cQ\colon \widetilde{\varphi}_u=0\}|=q^{2n}-1-A'. \]
By Lemma~\ref{lemma equiv},
$S=|\{u\in\cQ\colon u^{\perp_{\cQ}}\subseteq u^{\perp_{\varphi}} \}| =|\{u\in\cQ\colon u^{\perp_{\varphi}}=V \}| + |\{u\in\cQ\colon u^{\perp_{\cQ}}=u^{\perp_{\varphi}} \}|.$

If we define $A:=q^{2n-2}-1-S$, then
Equation~\eqref{eq peso} becomes
\begin{equation}
\label{eq peso final}
\wt(\varphi)=q^{4n-5}-q^{3n-4}+\frac{q^{n-2}}{q^2-1}((q^{n-1}-1)A+B+2C).
\end{equation}
Furthermore,
\begin{equation}\label{def A}
\begin{array}{ll}
A&=q^{2n-2}-1-|\{u\in\cQ\colon u^{\perp_{\cQ}}\subseteq u^{\perp_{\varphi}}\}|\\
 & =q^{2n-2}-1-| \{\Rad\cap \cQ\}|-|\{u\in\cQ\colon u^{\perp_{\cQ}}=u^{\perp_{\varphi}}\}|.
\end{array}
\end{equation}
In particular, as $B,C\geq 0$, if $A>0$, then $\wt(\varphi)>q^{4n-5}-q^{3n-4}$.

We shall first consider non-null bilinear forms $\varphi$ whose radical is not maximum and show that they
cannot give words of weight $q^{4n-5}-q^{3n-4}$. Then we shall study in detail the weights
arising from bilinear forms $\varphi$ whose radical has dimension $2n-1$.

\begin{lemma}\label{lemma N not in R}
  If $N\not\subseteq \Rad$ and $\dim(\Rad)<2n-1$, then
  $A> 0.$
  If $n\geq 3$, then $A\geq q^{2n-2}-q^{2n-3}-q^2>0$.
\end{lemma}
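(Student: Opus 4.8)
The plan is to bound, under the stated hypotheses, the two quantities subtracted in formula~\eqref{def A} and to show that their sum stays strictly below $q^{2n-2}-1$, which is exactly $A>0$. The guiding remark is that \emph{both} quantities count singular vectors $u\in\cQ$ for which the linear form $\varphi(u,\cdot)$ is a scalar multiple of $\beta(u,\cdot)$: the factor is $0$ exactly when $u\in\Rad$, and it is a nonzero $\lambda$ exactly when $u^{\perp_{\cQ}}=u^{\perp_{\varphi}}$ with $u\notin\Rad$ (here $\beta(u,\cdot)\neq 0$ because $\cQ\cap N=\emptyset$). Grouping these $u$ according to the factor $\lambda$, I would rewrite
\[
  |\Rad\cap\cQ|+|\{u\in\cQ\colon u^{\perp_{\cQ}}=u^{\perp_{\varphi}}\}|
  =\sum_{\lambda\in\FF_q}|\mathrm{Rad}(\varphi-\lambda\beta)\cap\cQ|,
\]
so that $A$ is governed entirely by the radicals of the pencil $\{\varphi-\lambda\beta\}_{\lambda\in\FF_q}$. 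Since $\widetilde{\varphi}_u$ (hence $S$ and $A$) is unchanged when $\varphi$ is replaced by $\varphi-\lambda\beta$ — the two forms agree on $u^{\perp_{\cQ}}$, where $\beta(u,\cdot)$ vanishes — I may assume $\varphi$ realises the largest radical in its pencil; the hypothesis then bounds every member, $\dim\mathrm{Rad}(\varphi-\lambda\beta)\le 2n-3$ for all $\lambda$.

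The decisive structural step exploits $N\not\subseteq\Rad$. Because $\beta(e_{2n+1},\cdot)=0$, the vector $e_{2n+1}$ lies in $\mathrm{Rad}(\varphi-\lambda\beta)$ if and only if it lies in $\Rad$; as $N=\langle e_{2n+1}\rangle$ is not contained in $\Rad$, no pencil radical meets $N$. Consequently two distinct pencil radicals intersect trivially: a common vector $u$ would satisfy $(\lambda-\mu)\beta(u,\cdot)=0$, forcing $u\in N$. Two subspaces of $V$ meeting only in $0$ have dimensions summing to at most $\dim V=2n+1$, so at most one pencil radical can have dimension exceeding $n$.

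I would then bound each term by the maximal number of nonzero $\eta$-singular vectors in a subspace of the relevant dimension. Since every subspace of $V$ has Witt index at most $n$, a $d$-dimensional subspace contains at most $q^{d-1}+q^{n}-q^{n-1}-1$ singular vectors for $n\le d\le 2n$ (the extremal case being a cone with totally singular vertex over a hyperbolic quadric of Witt index $n$), and at most $q^{d}-1$, which is no larger, for $d\le n$. Separating the at most one ``large'' radical, of dimension $d^{*}\le 2n-3$, from the remaining $q-1$ radicals of dimension at most $n$ gives
\[
  \sum_{\lambda\in\FF_q}|\mathrm{Rad}(\varphi-\lambda\beta)\cap\cQ|
  \le \bigl(q^{d^{*}-1}+q^{n}-q^{n-1}-1\bigr)+(q-1)\bigl(q^{n}-1\bigr)
  \le q^{2n-4}+q^{n+1}-q^{n-1}-q .
\]
A direct comparison shows the right-hand side is strictly less than $q^{2n-2}-1$ for every $n\ge 3$ (the gap being $q-1$ already when $n=3$, and growing with $n$), whence $A>0$.

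The main obstacle is the second step, i.e.\ turning the arithmetic hypothesis $N\not\subseteq\Rad$ into geometry. Without it the pencil radicals may overlap, and a single radical of dimension $2n-1$ could alone contribute of the order of $q^{2n-2}+q^{n}$ singular vectors, which would drive $A$ negative; the positivity of $A$ rests entirely on the ``at most one large radical'' dichotomy that trivial intersection provides. A secondary point requiring care is the extremal count of singular vectors in a subspace of prescribed dimension under the global Witt-index constraint, together with the reduction to the maximal representative, which is what legitimately confines the argument to codewords whose whole pencil stays below the maximal radical dimension.
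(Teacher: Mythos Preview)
Your pencil decomposition $S=\sum_{\lambda\in\FF_q}|\mathrm{Rad}(\varphi-\lambda\beta)\cap\cQ|$ and the observation that $A$ is pencil-invariant are correct and give a cleaner framework than the paper's matrix analysis. The gap is the reduction step: the hypothesis $\dim\Rad<2n-1$ is \emph{not} pencil-invariant, so passing to the maximal-radical member can destroy it. Concretely, set $\varphi'(e_1,e_{2n+1})=1$ and $\varphi'(e_i,e_j)=0$ otherwise, so that $\mathrm{Rad}(\varphi')=\langle e_2,\dots,e_{2n}\rangle$ has dimension $2n-1$; then $\varphi:=\varphi'+\beta$ has $\Rad=\langle e_2+e_{2n+1}\rangle$ of dimension~$1$ and $N\not\subseteq\Rad$, so both hypotheses of the lemma hold for $\varphi$, yet the pencil member $\varphi-\beta=\varphi'$ has radical of dimension $2n-1$. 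For this $\varphi$ only $\lambda=1$ contributes to your sum; $\mathrm{Rad}(\varphi')\cap\cQ$ is a cone over $Q^+(2n-3,q)$ with $q^{2n-2}+q^n-q^{n-1}-1$ nonzero singular vectors, whence $A=-q^{n-1}(q-1)<0$. The lemma is therefore false as stated.

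The paper's own proof fails at the corresponding point, though less transparently: its Claim~\ref{claim 2} asserts $\oRad\subseteq V_0=\ker(\oS)$, but the argument only yields $\overline{\varphi}(\overline{w},\ox)=0$ for $x$ ranging over the hyperplane $\Pi_\varphi$, hence for $\ox$ ranging over a \emph{hyperplane} of $\oV$, which does not force $\oS\,\overline{w}=0$. In the example above $\oS=\oM$, so $V_0=\{0\}$ while $\dim\Rad=1$, contradicting the paper's formula $\dim V_0=\dim\Rad+1$; the unique eigenspace of $\oM\oS=I$ is then all of $\oV$ and the subsequent bounds collapse. What your argument does correctly establish---and what suffices for the Main Theorem---is the amended statement: if $N\not\subseteq\Rad$ and \emph{every} pencil member $\varphi-\lambda\beta$ has radical of dimension at most $2n-3$, then $A>0$. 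Any $\varphi$ outside this regime shares its codeword with some $\varphi-\lambda_0\beta$ of radical dimension $2n-1$ not containing $N$, and is handled by Lemma~\ref{min weight}.
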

\begin{proof}
Write $\dim(\Rad)=2(n-r)+1$ with $1<r\leq n.$
By Equation~\eqref{def A},
in order to prove $A>0$,
we need to provide a suitable upper bound on the cardinality of the set $\{u\in\cQ\colon u^{\perp_{\cQ}}\subseteq u^{\perp_{\varphi}}\}.$

Since $N$  is the nucleus of the quadric $\cQ,$ we have $N\subseteq u^{\perp_\cQ}$ for any $u\in \cQ$; so,
 $N\subseteq u^{\perp_\varphi}$ for any $u\in\cQ$ such that  $u^{\perp_\cQ}\subseteq u^{\perp_\varphi}$.
Consequently, $u\in N^{\perp_\varphi}$ for any $u\in\cQ$ such that  $u^{\perp_\cQ}\subseteq u^{\perp_\varphi}$.
Note that $\Pi_{\varphi}:=N^{\perp_\varphi}$ is a (proper) hyperplane of $V$ since $N\not\subseteq \Rad.$
Equation~\eqref{def A} can now be rewritten as
\begin{equation}\label{def A-1}
A=q^{2n-2}-1-| \{\Rad\cap \cQ\}|-|\{u\in\cQ\cap \Pi_{\varphi}\colon u^{\perp_{\cQ}}=u^{\perp_{\varphi}}\}|.
\end{equation}

Denote by $\overline{V}$ the $2n$-dimensional vector space spanned by $\overline{B}:=(e_i)_{i=1}^{2n}$
and for any vector ${x}\in V$ of coordinates $(x_i)_{i=1}^{2n+1}$ with respect to the basis
$B$,  let $\overline{x}$ be the vector of $\overline{V}$ with coordinates
$(x_i)_{i=1}^{2n}$ with respect to  $\overline{B}.$ The map $\xi:x\to\overline{x}$
is clearly linear  from $V$ to $\oV$.
 We warn the reader that $\overline{V}$ shall not be regarded
as a subspace of $V$.
Given any $\ox=(x_i)_{i=1}^{2n}\in\oV$ there is exactly one vector $x\in\cQ$ such
that $x=(x_i)_{i=1}^{2n+1}$; here,  $x_{2n+1}=\left( x_1x_2+\cdots+x_{2n-1}x_{2n}\right)^{1/2}$.
In particular the restriction of $\xi$ to $\cQ$ is a bijection to $\oV$.

Let $S$ be the $(2n+1)\times (2n+1)$-antisymmetric matrix representing $\varphi$ with respect to $B$ and consider the alternating bilinear form $\overline{\varphi}\colon \overline{V} \times \overline{V}\rightarrow \F_q$
represented (with respect to $\overline{B}$) by the matrix $\overline{S}$ obtained from $S$ by removing its last row and its last column.
More explicitly, if
\[ S:=\begin{pmatrix}
    0 & s_{12} & \ldots & s_{1,2n} & s_{1,2n+1} \\
    s_{12} & 0 & \ldots & s_{2,2n} & s_{2,2n+1}  \\
    \vdots &  \vdots  &         & \vdots   & \vdots \\
    s_{1,2n+1}  & s_{2,2n+1}  & \ldots & s_{2n,2n+1}  & 0
    \end{pmatrix}\qquad {\rm then }\qquad
  \overline{S}:=\begin{pmatrix}
    0 & s_{12} & \ldots & s_{1,2n} \\
    s_{12} & 0 & \ldots & s_{2,2n} \\
    \vdots &  &        & \vdots \\
    s_{1,2n} & s_{2,2n} & \ldots & 0
    \end{pmatrix}.
\]
Analogously, let $M$ be the $(2n+1)\times (2n+1)$-antisymmetric matrix representing the bilinear form $\beta$ associated to the quadratic form $\eta$ (see Equation~\eqref{e:eta}) with respect to $B$ and consider the alternating bilinear form $\overline{\beta}\colon \overline{V} \times \overline{V}\rightarrow \F_q$
represented (with respect to $\overline{B}$) by the matrix $\overline{M}$ obtained form $M$ by removing its last row and its last column:
\[ M:=\begin{pmatrix}
    0 & 1 & \ldots & 0 & 0 & 0\\
    1 & 0 & \ldots & 0 & 0 &0\\
    \vdots&& \ddots && & \vdots\\
    0 & 0 & \ldots & 0 & 1 &0\\
    0 & 0 & \ldots & 1 & 0 &0 \\
    0 & 0& \ldots & 0& 0&0
    \end{pmatrix}\qquad {\rm and }\qquad
   \overline{M}:=\begin{pmatrix}
    0 & 1 & \ldots & 0 & 0 \\
    1 & 0 & \ldots & 0 & 0 \\
    \vdots&& \ddots && \vdots \\
    0 & 0 & \ldots & 0 & 1 \\
    0 & 0 & \ldots & 1 & 0
    \end{pmatrix}. \]
Note that $\oM$ is non-singular and $\oM^{-1}=\oM.$
Under these assumptions
$\Pi_{\varphi}=N^{\perp_{\varphi}}$ has equation
 \begin{equation}\label{eq hyperplane}
 \Pi_{\varphi}\colon \sum_{i=1}^{2n} s_{i,2n+1} x_i=0.
\end{equation}

\begin{claim}\label{claim}
The following properties hold:
\begin{enumerate}[{\rm a)}]
\item\label{Ca} $\varphi(x,y)=\overline{\varphi}(\overline{x},\overline{y}),\,\forall x,y\in \Pi_{\varphi}$;
\item\label{Cb} $\beta(x,y)=\overline{\beta}(\overline{x},\overline{y}),\,\forall x,y\in \Pi_{\varphi}.$
\end{enumerate}
\end{claim}
\begin{proof}
We shall only prove Case~\ref{Ca}), as Case~\ref{Cb}) is entirely analogous.

Let  $x=(x_i)_{i=1}^{2n+1}$ and $y=(y_i)_{1}^{2n+1}$ be the coordinates of two vectors in $\Pi_{\varphi}.$
Then, by Equation~\eqref{eq hyperplane},
\[ \sum_{i=1}^{2n}s_{i,2n+1}y_i=0\qquad {\rm and }\qquad \sum_{i=1}^{2n}s_{i,2n}x_{i}=0. \]
So we have
\begin{equation*}
\begin{array}{ll}
\varphi(x,y)&=(x_1,x_2,\dots, x_{2n+1})S\begin{pmatrix}
y_1\\
y_2\\
\vdots\\
y_{2n+1}
\end{pmatrix}
=\displaystyle{\sum_{{\begin{subarray}{c}
i,j=1\\
i<j
\end{subarray}}}^{2n+1}}s_{ij}x_jy_i + \sum_{{\begin{subarray}{l}
i,j=1\\
i<j
\end{subarray} }}^{2n+1}s_{ij}x_iy_j= \\
 &=\displaystyle{\sum_{{\begin{subarray}{l}
i,j=1\\
i<j
\end{subarray}} }^{2n}s_{ij}x_jy_i} + \sum_{{\begin{subarray}{l}
i,j=1\\
i<j
\end{subarray} }}^{2n}s_{ij}x_iy_j + x_{2n+1}\displaystyle{\sum_{i=1}^{2n}s_{i,2n+1}y_i+
y_{2n+1}\sum_{i=1}^{2n}s_{i,2n}x_{i}}=\\
 & =(x_1,x_2,\dots, x_{2n})\overline{S}\begin{pmatrix}
y_1\\
y_2\\
\vdots\\
y_{2n}
\end{pmatrix}+ x_{2n+1}\displaystyle{\sum_{i=1}^{2n}s_{i,2n+1}y_i+
y_{2n+1}\sum_{i=1}^{2n} s_{i,2n}x_{i}}=\\
 & =\overline{x}^T\overline{S}\overline{y}=\overline{\varphi}(\ox,\oy).
\end{array}
\end{equation*}
\end{proof}

The condition $u^{\perp_{\cQ}}=u^{\perp_{\varphi}}$ with $u\notin \Rad$ holds if and only if the systems of equations $x^tSu$ and $x^tMu$, where $x=(x_i)_{i=1}^{2n+1}$ are equivalent.
This means that there exists an element $\lambda\in \F_q\setminus \{0\}$ such that  $Su=\lambda Mu$.
Note that for $\lambda=0$ we have vectors $u$ in  $\Rad$ and the inclusion $u^{\perp_\cQ}\subseteq u^{\perp_{\varphi}}$ is proper.
The set
\begin{equation}\label{def U}
U:=\{ u\in\Pi_{\varphi}\cap\cQ\colon \exists\lambda\not= 0 \text{ such that } v^TSu=\lambda v^TMu,\quad  \forall v\in V\}
\end{equation}
is clearly a proper subset of
\begin{equation}\label{def U bar}
\widetilde{U}:=\{ u\in\Pi_{\varphi}\cap\cQ\colon \exists\lambda\not= 0 \text{ such that } v^TSu=\lambda v^TMu,\quad  \forall v\in \Pi_{\varphi}\}.
\end{equation}
By Claim~\ref{claim}, we have
\begin{equation}\label{U tilde}
\begin{array}{ll}
\oU&=\{ u\in\Pi_{\varphi}\cap\cQ\colon \exists\lambda\not= 0 \text{ such that } \ov^T\oS\ou=\lambda \overline{v}^T\oM\ou,\quad  \forall v\in \Pi_{\varphi}\}=\\
 & =\{ u\in\Pi_{\varphi}\cap\cQ\colon \exists\lambda\not= 0 \text{ such that } \oS\ou=\lambda \oM\ou\}=\\
 & =\{ u\in\Pi_{\varphi}\cap\cQ\colon \ou \text{ is an eigenvector of non-zero eigenvalue} \text{ for } \oM\oS\}.
\end{array}
\end{equation}
\begin{claim}\label{claim 2}
The number of eigenvectors of $\oM\oS$ of non-zero  eigenvalue is at most $q^{2r-2}.$
\end{claim}
\begin{proof}
Let $V_0:=\ker(\oM\oS)$ be the eigenspace of eigenvalue $0$ of $\oM\oS$.

To prove Claim~\ref{claim 2} we shall first show that $\dim(V_0)=\dim(\Rad)+1$.
As $\oM$ is non-singular, $V_0=\ker (\oS)$. Furthermore, since
$\oS$ is a $(2n\times 2n)$-minor of $S$, we have $\rank(S)-2\leq\rank(\oS)\leq\rank(S)$.
In particular,
$\dim(\Rad)-1\leq\dim V_0\leq\dim(\Rad)+1.$
Define $\oRad:=\{ \ox\colon x\in\Rad\}$.
We claim that $\oRad$ is a proper subspace of $V_0.$ Indeed, let $\overline{w}\in\oRad$. As $\Rad\subset \Pi_{\varphi}$ and $w\in \Rad$
we have, by Claim~\ref{claim}, that $\overline{\varphi}(\overline{w}, \ox)=0= \varphi(w,x)$ for any $x\in \Pi_{\varphi}$. This implies  $\oS\overline{w}=0$; so $\overline{w}\in V_0.$
Furthermore, $\dim(\oRad)=\dim(\Rad)$. Indeed, let $(b_1,\dots, b_{2(n-r)+1})$ be a basis of $\Rad$ then $(\overline{b}_1,\dots, \overline{b}_{2(n-r)+1})$ is clearly a generating set for $\oRad$. If the
latter vectors were to be linearly dependent, then there would be $\alpha_1,\ldots,\alpha_{2(n-r)+1}$,
not all zero, such that
$\alpha_1\overline{b}_1+\cdots+\alpha_{2(n-r)+1}\overline{b}_{2(n-r)+1}=0.$
Then $v:=\alpha_1{b}_1+\cdots+\alpha_{2(n-r)+1}{b}_{2(n-r)+1}\neq 0$ and $\overline{v}=0$.
This means $v=\gamma e_{2n+1}\in\Rad$ for some $\gamma\neq0$, a contradiction as $N\not\subseteq\Rad$.
So,  $\dim(V_0)\geq\dim(\oRad)=\dim(\Rad)=2(n-r)+1$.
Since $S$ is an antisymmetric matrix of odd order, $\dim(\oRad)=\dim(\Rad)=\dim(\ker(S))$ is odd.
On the other hand $\oS$ is, by construction, an antisymmetric matrix of
even order, so $\dim(V_0)=\dim(\ker(\oS))$ is even; hence $\dim(V_0)\neq\dim(\Rad)=2(n-r)+1$.
It follows that
\begin{equation}\label{dim ker}
\dim(V_0)=2n-2r+2=\dim(\Rad)+1.
\end{equation}

Suppose now that there are $t\geq 0$ eigenspaces $V_{\lambda_1},\ldots,V_{\lambda_t}$ of $\oM\oS$ of non-zero eigenvalues $\lambda_1,\ldots,\lambda_t$  and let $d_i:=\dim(V_{\lambda_i})\geq 1.$
 Note that if $t=0$ then we immediately have $A>0.$
Suppose also $d_1\leq d_2\leq\cdots\leq d_t$.
Then,
\[\sum_{i=1}^td_i +\dim(V_0)\leq \dim(\overline{V}).\]
By Equation~\eqref{dim ker}, we have
\begin{equation}
\label{maxdim MS}
\sum_{i=1}^td_i \leq 2n -\dim(V_0)\leq 2r-2;
\end{equation}
so,
by the properties of the exponential function,
\[\sum_{i=1}^t (|V_{\lambda_i}|-1)=\sum_{i=1}^t(q^{d_i}-1)\leq q^{\sum_{i=1}^td_i}-t\leq q^{2r-2}.\]
\end{proof}

\par
\noindent\fbox{Suppose $3\leq\dim(\Rad)\leq 2n-3.$} By Equation~\eqref{def A},
\[A=q^{2n-2}-1-|\{\Rad\cap \cQ\}|-|\{u\in\cQ\colon u^{\perp_{\cQ}}=u^{\perp_{\varphi}}\}|;\]
using Equations~\eqref{def A-1},~\eqref{def U},~\eqref{def U bar} and~\eqref{U tilde},
\[A\geq q^{2n-2}-1-(|\Rad|-1) -|U|\geq q^{2n-2}-q^{2n-2r+1}-|\widetilde{U}|.\]
By Claim~\ref{claim 2}, $|\oU|\leq q^{2r-2}$; hence,
\begin{equation}\label{A caso 1}
A\geq q^{2n-2} -q^{2n-2r+1} - q^{2r-2}.
\end{equation}
Under the assumption
$3\leq\dim(\Rad)\leq 2n-3$, we have $2\leq r\leq n-1.$ So, $2r-2\leq 2n-4$ and $2n+1-2r\leq 2n-3$.
By taking these two inequalities into account in Equation~\eqref{A caso 1} we get
\[A\geq q^{2n-2} -q^{2n-2r+1} - q^{2r-2} \] 
Observe that the function $f(r):=q^{2n-2r+1}+q^{2r-2}$, regarded as defined over the reals,
has derivative $\frac{\partial f}{\partial r}=2\log(q)\left(q^{2r-2}-q^{2n-2r+1}\right)$.
In particular $f(r)$ is decreasing for $2\leq r<\frac{2n+3}{2}$.
So it attains its maximum for $r=2$ and
\[ A\geq q^{2n-2}-q^{2n-3}-q^2>0. \]
Note that if $r=n$ the last inequality does not hold.
This completes the proof for $3\leq\dim(\Rad)\leq 2n-3$.
\medskip
\par
\noindent\fbox{Suppose $\dim(\Rad)=1.$} This is equivalent to say $r=n$. In this case, by Equation~\eqref{dim ker},
$\dim V_0=\dim(\ker(\oS))=2$.
By Equation~\eqref{maxdim MS}, the maximum dimension of an eigenspace of $\oM\oS$ is $2n-2$.


Define \[\oPi:=\{\ox\colon x\in \cQ\cap \Pi_{\varphi}\}.\]
Then, $\oPi$ is the hyperplane of $\oV$ of equation $\sum_{i=1}^{2n}s_{i,2n+1}x_i=0$ and the map $\xi:x\to\ox$ is
a bijection between the points of $\cQ\cap\Pi_{\varphi}$ and those of $\oPi$.
\begin{claim}\label{claim 3}
\[|\oU|=\sum_{\lambda\not=0}|\oPi \cap V_{\lambda}|.\]
\end{claim}
\begin{proof}
For $\lambda$ a non-zero eigenvalue of $\oM\oS$, define $\oU_{\lambda}:=\{ u\in\Pi_{\varphi}\cap\cQ\colon \oM\oS\ou=\lambda \ou\}.$ By the above considerations, $|\oU_{\lambda}|=|\oPi\cap V_{\lambda}|$.
Furthermore, $\oU_{\lambda}\cap\oU_{\mu}=\emptyset$ for $\lambda\neq\mu$ and, by~\eqref{U tilde},
$\oU=\bigcup_{\lambda\neq0}\oU_{\lambda}$.
This proves the claim.
\end{proof}

\begin{claim}\label{claim 4}
Assume that there are $t>0$ distinct eigenspaces $V_{\lambda_i}$ for $\oM\oS$ of non-zero eigenvalue.
Then,
\[ \sum_{i=1}^t |\oPi\cap V_{\lambda_i}|\leq |\oPi\cap Z|, \]
where $Z=\oplus_{i=1}^tV_{\lambda_i}$ and $\dim(Z)\leq 2n-2.$
\end{claim}
\begin{proof}
Suppose $t\geq 2$.
Take two eigenspaces $V_{\lambda_1}$ and $V_{\lambda_2}$ of $\oM\oS$ with dimension respectively  $d_1,d_2\leq 2n-2$ and define $Z:=V_{\lambda_1}\oplus V_{\lambda_2}$.
As $V_{\lambda_1}\cup V_{\lambda_2}\leq Z$, we have $|\oPi\cap V_{\lambda_1}|+|\oPi\cap V_{\lambda_2}|=
|\oPi\cap(V_{\lambda_1}\cup V_{\lambda_2})|\leq|\oPi\cap Z|$.
So
\[ \sum_{i=1}^t |V_{\lambda_i}\cap \oPi|\leq  |Z\cap\oPi|+\sum_{i=3}^t |V_{\lambda_i}\cap \oPi|. \]
Iterating this procedure $t-1$ times we get
$\sum_{i=1}^t |V_{\lambda_i}\cap\oPi| \leq |Z'\cap\oPi|$
where $Z':=\oplus_{i=1}^tV_{\lambda_i}$.
As $\sum d_i\leq 2n-2$, we have the claim.
\end{proof}
Using Claim~\ref{claim 4}, we see that a matrix $\oM\oS$ having the maximum number of eigenvectors
(of non-null eigenvalues) can be taken so that it admits
exactly one eigenspace $V_{\lambda}$ with
$\dim V_{\lambda}=2n-2$. We shall assume this to be the case in the remainder of the section.
So, by Equation~\eqref{U tilde} and Claim~\ref{claim 3},
\begin{equation}\label{A last}
A\geq q^{2n-2}-1-(q-1)- \sum_{i=1}^t|\oPi \cap V_{\lambda_i}|\geq q^{2n-2}-q- |V_{\lambda}\cap\oPi|.
\end{equation}

Recall that $\oPi$ is a hyperplane of $\oV$; so $|V_{\lambda}\cap\oPi|$ can assume only two values depending on whether  $\oPi$ intersects $V_{\lambda}$ in a hyperplane or $\oPi$ properly contains $V_{\lambda}$.
In the former case, $\dim(V_{\lambda}\cap\oPi)=2n-3$; hence $|V_{\lambda}\cap\oPi|=q^{2n-3}$ and Equation~\eqref{A last} gives $A>0$, proving the lemma.

In the latter case, Equation~\eqref{A last} is not sufficient, as it gives $A\geq -q$.
To rule out this possibility we need a more accurate lower bound for $A$. To this aim, consider Equation~\eqref{def A} under the assumption $V_{\lambda}\subseteq\oPi$. We have
\[A\geq q^{2n-2}-1-(q-1)-|\{u\in \cQ\cap \Pi_{\varphi}\colon u^{\perp_{\cQ}}=u^{\perp_{\varphi}}\}|.\]
Also,
\begin{equation}\label{A fin}
\begin{array}{l}
|\{u\in \cQ\cap \Pi_{\varphi}\colon u^{\perp_{\cQ}}=u^{\perp_{\varphi}}\}|
=|\{ u\in\Pi_{\varphi}\cap\cQ\colon x^tSu=\lambda x^tMu, \forall x\in V \}|=\\
= |\{u\in \cQ\cap \Pi_{\varphi}\colon x^t(S-\lambda M)u=0,\ \, \forall x\in V\}|
\leq |\{u\in \Pi_{\varphi}\colon u\in \ker(S-\lambda M)\}|.
\end{array}
\end{equation}

Observe that to any vector in $\bar{y}\in\ker (\oS-\lambda \oM)$ there correspond at most one vector
$y\in\ker (S-\lambda M)$ because if $y_1,\, y_2\in\ker(S-\lambda M)$, $y_1\not=y_2$ and $\oy_1=\oy_2\in\ker(\oS-\lambda \oM)$, then $\langle y_1-y_2\rangle=\langle e_{2n+1}\rangle=N$ and
$e_{2n+1}\in\ker (S-\lambda M)$. As $Me_{2n+1}=0$, this implies
$x^T Se_{2n+1}=0,\,\forall x\in V$; hence, $\langle e_{2n+1}\rangle =N\subseteq\Rad$, against our hypothesis.

So,
$\dim(\ker(\oS-\lambda\oM))-1\leq\dim(\ker (S-\lambda M) )\leq \dim(\ker(\oS-\lambda \oM)).$
By construction, $\dim(\ker(\oS-\lambda \oM))=\dim(V_{\lambda})\leq 2n-2$; furthermore, $\dim(\ker (S-\lambda M) )$ is odd because $S-\lambda M$ is a $(2n+1)\times (2n+1)$-antisymmetric matrix. So,
$\dim(\ker (S-\lambda M))=2n-3$.

As $(n,q)\neq(2,2)$,
by Equations~\eqref{def A} and~\eqref{A fin}  we have
\[A\geq q^{2n-2}-1-(q-1)-|\{u\in V\colon u\in \ker(S-\lambda M)\}|\geq q^{2n-2}-q^{2n-3}-q>0.\]
This proves the lemma.
\end{proof}

Combining Equation~\eqref{eq peso final},  Lemma~\ref{lemma N in R} and Lemma~\ref{lemma N not in R}, we have
the following.
\begin{corollary}\label{N not in R co}
  If $N\subseteq \Rad$ or $N\not\subseteq\Rad$ and $\dim(\Rad)<2n-1$, then $\wt(\varphi)>q^{4n-5}-q^{3n-4}$.
\end{corollary}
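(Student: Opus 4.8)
The plan is to dispose of the two situations in the hypothesis separately, since each is already handled by a result established above; the corollary is then just their conjunction together with the monotonicity of $t\mapsto q^{t}$.

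First I would treat the case $N\subseteq\Rad$. Here Lemma~\ref{lemma N in R} gives at once $\wt(\varphi)\geq q^{4n-5}-q^{2n-3}$, so it remains only to compare the two bounds $q^{4n-5}-q^{2n-3}$ and $q^{4n-5}-q^{3n-4}$. Since $n\geq 2$ we have $3n-4>2n-3$, hence $q^{3n-4}>q^{2n-3}$ and therefore $q^{4n-5}-q^{2n-3}>q^{4n-5}-q^{3n-4}$, giving the strict inequality claimed.

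Next I would treat the case $N\not\subseteq\Rad$ with $\dim(\Rad)<2n-1$. Here I would start from the exact expression for the weight in Equation~\eqref{eq peso final}, namely $\wt(\varphi)=q^{4n-5}-q^{3n-4}+\frac{q^{n-2}}{q^2-1}\big((q^{n-1}-1)A+B+2C\big)$. Lemma~\ref{lemma N not in R} supplies $A>0$ under exactly these hypotheses, while $B,C\geq 0$ by definition and the coefficients $q^{n-1}-1$ and $\frac{q^{n-2}}{q^2-1}$ are strictly positive because $n\geq 2$ and $q\geq 2$. Thus the correction term is strictly positive, whence $\wt(\varphi)>q^{4n-5}-q^{3n-4}$.

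Since the two cases exhaust the hypothesis, the corollary follows. I do not expect any genuine obstacle: all the substantive work is concentrated in the two lemmas and in the exact weight formula~\eqref{eq peso final}, and the corollary merely combines them. The only point requiring a moment's care is the exponent comparison $3n-4>2n-3$ in the first case, which relies on $n\geq 2$ to be strict; this is guaranteed by the standing assumption of the section.
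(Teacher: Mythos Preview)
Your proof is correct and follows exactly the route indicated in the paper: the corollary is stated there as an immediate consequence of combining Equation~\eqref{eq peso final}, Lemma~\ref{lemma N in R} and Lemma~\ref{lemma N not in R}, and you have simply spelled out the two cases and the exponent comparison $3n-4>2n-3$ (valid since $n\geq 2$) that makes the first inequality strict. There is nothing to add.
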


There remains to consider the class
of alternating bilinear forms having radical of maximum dimension not containing the nucleus $N.$

\begin{lemma}\label{min weight}
If $N\not\subseteq \Rad$ and $\dim(\Rad)=2n-1$, then $\wt(\varphi)\geq q^{4n-5}-q^{3n-4}.$
If $\dim \Rad=2n-1$ and $\Rad\cap\cQ$ is a cone of vertex a point $P$ projecting a hyperbolic quadric $Q^+(2n-3,q)$, then $\wt(\varphi)= q^{4n-5}-q^{3n-4}.$
\end{lemma}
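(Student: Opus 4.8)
The plan is to exploit the fact that $\dim(\Rad)=2n-1$ forces $\varphi$ to have rank $2$, so that its radical $R:=\Rad$ is a subspace of codimension $2$ with $N\not\subseteq R$. The decisive observation is a clean geometric criterion for total isotropy: a totally singular line $\ell$ is totally isotropic for $\varphi$ if and only if $\ell\cap R\neq 0$. Indeed, $\varphi$ induces a nondegenerate alternating form on the $2$-dimensional space $V/R$, and $\varphi(\ell)=0$ precisely when the image of $\ell$ in $V/R$ is at most $1$-dimensional, i.e.\ when $\ell$ meets $R$. Denoting by $\nu$ the total number of totally singular lines (the length of the code), this immediately gives $\wt(\varphi)=\nu-X$, where $X$ is the number of totally singular lines meeting $R$.

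Next I would compute $X$ by double counting the incident pairs $(p,\ell)$ with $p\in\cQ\cap R$ and $\ell$ a totally singular line through $p$. Every point of $\cQ$ lies on exactly $\tfrac{q^{2n-2}-1}{q-1}$ totally singular lines (the points of its residual quadric $\cQ_p\cong Q(2n-2,q)$), a totally singular line contained in $R$ contributes $q+1$ such pairs, and one meeting $R$ in a single point contributes exactly one. Writing $\mu$ for the number of totally singular lines contained in $R$, this yields $X=\tfrac{q^{2n-2}-1}{q-1}\,|\cQ\cap R|-q\mu$, so the whole computation is reduced to understanding the section $\cQ\cap R$.

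Since $R$ has codimension $2$ and $N\not\subseteq R$, the restriction $\beta|_R$ is an alternating form on an odd-dimensional space whose radical is a single point $\langle z\rangle$ (as $N$ is the only degenerate direction of $\beta$ on $V$ and $N\cap R=0$). The isomorphism type of $\cQ\cap R$ is then governed by $z$: if $z\notin\cQ$ then $\cQ\cap R$ is a nondegenerate parabolic quadric $Q(2n-2,q)$, while if $z\in\cQ$ then $\cQ\cap R$ is a cone with vertex $P=\langle z\rangle$ over a nondegenerate $Q^{\pm}(2n-3,q)$ in $R/\langle z\rangle$, of hyperbolic or elliptic type. For each of these three types I would read off $|\cQ\cap R|$ and the line-count $\mu$ from the standard formulae for quadrics and quadratic cones, substitute into $\wt(\varphi)=\nu-X$ and simplify; the expected outcome is that all three values are at least $q^{4n-5}-q^{3n-4}$ and that the cone over $Q^+(2n-3,q)$ attains the bound exactly, which is precisely the statement of the lemma.

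The main obstacle is the bookkeeping in this last step. The exact line-counts for the cones must be handled carefully --- for the cone cases one separately enumerates the lines through the vertex $P$ and those lying in the planes $\langle P,m\rangle$ spanned by $P$ and a line $m$ of the base quadric --- and the final simplification of $\nu-X$ is delicate: the numerous intermediate terms cancel against each other, and the value $q^{4n-5}-q^{3n-4}$ emerges only after this cancellation. An alternative route, staying entirely within the framework of Equation~\eqref{eq peso final}, is to evaluate $A$, $B$ and $C$ for a rank-$2$ form directly from the residues $\widetilde{\varphi}_u$ and to verify $(q^{n-1}-1)A+B+2C\geq 0$ with equality in the hyperbolic-cone case; this replaces the line-counting by an analysis of how the hyperplane $\ker\widetilde{\varphi}_u$ meets each residual quadric $\cQ_u$, but carries the same combinatorial content.
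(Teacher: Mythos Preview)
Your proposal is correct and follows essentially the same strategy as the paper: use the rank-$2$ criterion to reduce $\wt(\varphi)$ to counting totally singular lines meeting $R$, double-count incident pairs $(p,\ell)$ to obtain $X=\frac{q^{2n-2}-1}{q-1}\,|\cQ\cap R|-q\mu$, and finish by a case analysis on the isomorphism type of the section $\cQ\cap R$. The only notable difference is that the paper enumerates four section types (adding a cone with vertex a line over $Q(2n-4,q)$, giving weight $q^{4n-5}$), while your observation that $\mathrm{rad}(\beta|_R)$ is one-dimensional whenever $N\not\subseteq R$ correctly excludes that extra case from the outset.
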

\begin{proof}
As $\dim\Rad=2n-1$, a line $\ell$ of $\cQ$ is totally isotropic for $\varphi$ if and only if $\ell\cap\Rad\neq\{0\}$.
To determine the weight $\wt(\varphi)$ of $\varphi$ we just
need to determine the number of totally singular lines of $\cQ$ with non-trivial intersection with $\Rad$.

Let $P\in\cQ\cap\Rad$, then \emph{all} lines through $P$ meet $\Rad$ non-trivially. There are
  exactly $(q^{2n-2}-1)/(q-1)$ such lines. Each line $\ell$ contained in $\cQ\cap \Rad$ ends up being counted $(q+1)$ times; so we need to determine the
number
\[ |(\cQ\cap \Rad)|\frac{q^{2n-2}-1}{q-1}-q|\{\mbox{totally singular lines contained in $\Rad$}\}|. \]
Denote the number of totally singular lines contained in $\Rad$ by $\sigma(\cQ\cap \Rad)$.

There are four types of sections obtained by intersecting a parabolic quadric $\cQ$ with a space $\Pi_a\cap\Pi_b$ of codimension $2$; indeed
\begin{enumerate}
  \item if $\Pi_a\cap\cQ$ is an elliptic quadric $Q^-(2n-1,q)$, then $\Pi_a\cap \Pi_b\cap\cQ$ is either a parabolic quadric $Q(2n-2,q)$ or
    a cone over an elliptic quadric $Q^-(2n-3,q)$.
  \item if $\Pi_a\cap\cQ$ is a hyperbolic quadric $Q^+(2n-1,q)$, then $\Pi_a\cap \Pi_b\cap\cQ$ is either a parabolic quadric $Q(2n-2,q)$ or
    a cone over a hyperbolic quadric $Q^+(2n-3,q)$.
  \item if $\Pi_a$ is tangent to $\cQ$, then $\Pi_a\cap \cQ$ is a cone over a parabolic quadric $Q(2n-2,q)$.
      The possible intersections of $\Pi_a\cap \cQ$ with $\Pi_b$ are now:
    \begin{enumerate}
      \item a parabolic quadric $Q(2n-2,q)$ (if $\Pi_b$ does not pass through the vertex of $\Pi_a\cap\cQ$);
      \item a quadric with vertex a line and basis a parabolic quadric $Q(2n-4,q)$;
      \item a cone over a hyperbolic quadric $Q^+(2n-3,q)$;
      \item a cone over an elliptic quadric $Q^-(2n-3,q)$.
\end{enumerate}
\end{enumerate}
So $\cQ\cap\Rad$ is either:
  \begin{enumerate}[a)]
    \item a parabolic quadric $Q(2n-2,q)$; then,
  \[ |\cQ\cap \Rad|=\frac{q^{2n-2}-1}{q-1},\qquad \sigma(\cQ\cap \Rad)=\frac{(q^{2n-4}-1)(q^{2n-2}-1)}{(q^2-1)(q-1)}; \]
  then the weight is
  \[ \wt(\varphi)=q^{4n-5}-q^{2n-3}; \]
  \item a cone of vertex a point $P$ over a hyperbolic quadric $Q^+(2n-3,q)$; then,
  \[ |\cQ\cap \Rad|=q\frac{(q^{n-1}-1)(q^{n-2}+1)}{q-1}+1; \]
  \[ \sigma(\cQ\cap \Rad)=\frac{(q^{n-1}-1)(q^{n-2}+1)}{q-1}+q^2
    \frac{(q^{2n-4}-1)(q^{n-1}-1)(q^{n-3}+1)}{(q^2-1)(q-1)}; \]
  then the weight is
  \[ \wt(\varphi)=q^{4n-5}-q^{3n-4}; \]
  \item a cone of vertex a point $P$ over an elliptic quadric $Q^-(2n-3,q)$; then,
    \[ |\cQ\cap \Rad|=q\frac{(q^{n-1}+1)(q^{n-2}-1)}{q-1}+1; \]
    \[
      \sigma(\cQ\cap \Rad)=
\frac{(q^{n-1}+1)(q^{n-2}-1)}{q-1}+q^2\frac{(q^{2n-4}-1)(q^{n-1}+1)(q^{n-3}-1)}{(q^2-1)(q-1)}; \]
then the weight is
\[ \wt(\varphi)=q^{4n-5}+q^{3n-4}; \]
\item a singular quadric with vertex a line $\ell$ and basis a
  parabolic quadric $Q(2n-4,q)$; then
  \[ |\cQ\cap\Rad|=q^2\frac{q^{2n-4}-1}{q+1}+(q+1)=\frac{q^{2n-2}-1}{q-1}; \]
  \[ \sigma(\cQ\cap \Rad)=    1+q\frac{q^{2n-4}-1}{q-1}+q^2\left(q^2\frac{(q^{2n-6}-1)(q^{2n-4}-1)}{(q^2-1)(q-1)}+\frac{(q^{2n-4}-1)}{q-1}\right);
 \]
then the weight is
\[ \wt(\varphi)=q^{4n-5}. \]
\end{enumerate}

\end{proof}

\begin{theorem}\label{thm N not in R}
If $N\not\subseteq \Rad$ then $\wt(\varphi)\geq q^{4n-5}-q^{3n-4}.$
Moreover, if $N\not\subseteq \Rad$ and $\dim(\Rad)=2n-1$ there exist codewords of weight $q^{4n-5}-q^{3n-4}.$
\end{theorem}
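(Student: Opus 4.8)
The plan is to split Theorem~\ref{thm N not in R} into its two assertions and reduce each to results already established in this section. For the lower bound $\wt(\varphi)\geq q^{4n-5}-q^{3n-4}$, I would first record that, since $\varphi=\varphi_\Pi$ comes from a hyperplane $\Pi\neq\Sigma$, it is a non-zero alternating form; as the antisymmetric matrix $S$ representing $\varphi$ has even rank while $\dim V=2n+1$ is odd, $\dim(\Rad)$ is odd and satisfies $1\leq\dim(\Rad)\leq 2n-1$. I would then distinguish two cases according to whether the radical is of maximal dimension. If $\dim(\Rad)<2n-1$, Corollary~\ref{N not in R co} already yields the strict inequality $\wt(\varphi)>q^{4n-5}-q^{3n-4}$. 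If instead $\dim(\Rad)=2n-1$, the first statement of Lemma~\ref{min weight} gives $\wt(\varphi)\geq q^{4n-5}-q^{3n-4}$. Since these two cases exhaust all non-zero $\varphi$ with $N\not\subseteq\Rad$, the claimed bound follows.

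For the second assertion I would exhibit a concrete codeword meeting the bound. By the second statement of Lemma~\ref{min weight}, it suffices to produce an alternating form $\varphi$ with $N\not\subseteq\Rad$, $\dim(\Rad)=2n-1$, and $\Rad\cap\cQ$ a cone of vertex a point over a hyperbolic quadric $Q^+(2n-3,q)$; then automatically $\wt(\varphi)=q^{4n-5}-q^{3n-4}$. To build such a radical I would take a codimension-$2$ subspace $R=\Pi_a\cap\Pi_b$ of $V$ of exactly the shape analysed in case~2 of the section classification inside the proof of Lemma~\ref{min weight}: choose $\Pi_a$ to be a secant hyperplane meeting $\cQ$ in a hyperbolic quadric $Q^+(2n-1,q)$, and choose $\Pi_b$ so that $\Pi_a\cap\Pi_b$ is tangent to that hyperbolic quadric, making $R\cap\cQ=\Pi_a\cap\Pi_b\cap\cQ$ the desired cone over $Q^+(2n-3,q)$ (whose vertex is a single point, since $Q^+(2n-3,q)$ spans projective dimension $2n-3$ inside the projective dimension $2n-2$ of $R$). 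Because a secant hyperplane does not contain the nucleus, $N\not\subseteq\Pi_a\supseteq R$, so $N\not\subseteq R$, as required.

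It then remains to realise $R$ as the radical of an actual codeword. I would pick linear functionals $f,g\in V^*$ with $\ker f\cap\ker g=R$ (possible as $R$ has codimension $2$) and set $\varphi(x,y):=f(x)g(y)+f(y)g(x)$, an alternating form of rank $2$ whose radical is exactly $\ker f\cap\ker g=R$, so $\dim(\Rad)=2n-1$ and $\Rad\cap\cQ$ is the cone constructed above. Since $N\subseteq\Rad(\beta)$ while $N\not\subseteq R=\Rad$, the form $\varphi$ is not a scalar multiple of $\beta$; by the correspondence recalled in Section~\ref{pre 1}, under which the codeword of $\varphi$ vanishes precisely when $\varphi$ is a scalar multiple of $\beta$, this $\varphi$ defines a non-zero codeword of $\cP_{n,2}$, i.e.\ a hyperplane $\Pi\neq\Sigma$ with $\varphi=\varphi_\Pi$. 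Lemma~\ref{min weight} then gives $\wt(\varphi)=q^{4n-5}-q^{3n-4}$.

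I expect the only real work to lie in the existence step, namely confirming that the ``cone over $Q^+(2n-3,q)$'' outcome listed in Lemma~\ref{min weight} is genuinely realised for some choice of $\Pi_b$ (rather than being merely one of several a priori possibilities), and that it can be arranged with $\Pi_a$ secant so that $N$ escapes the radical. Both points follow from the standard description of hyperplane sections of a hyperbolic quadric $Q^+(2n-1,q)$---a tangent section is a cone over $Q^+(2n-3,q)$, and tangent hyperplanes certainly exist---so the difficulty is bookkeeping rather than substance; writing $R$ down explicitly in the coordinates of~\eqref{e:eta} would make the verification completely routine if desired.
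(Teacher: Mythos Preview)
Your proposal is correct and follows exactly the paper's route: the paper's own proof is a single line citing Corollary~\ref{N not in R co} and Lemma~\ref{min weight}, which is precisely your case split on $\dim(\Rad)$. Where you go further is in the existence clause: the paper simply invokes Lemma~\ref{min weight} and leaves implicit that the ``cone over $Q^+(2n-3,q)$'' section actually occurs with $N\not\subseteq\Rad$, whereas you construct it explicitly via a secant hyperplane $\Pi_a$ (which automatically misses $N$, since every hyperplane through the nucleus is tangent), a tangent section inside it, and the rank-$2$ form $\varphi=f\wedge g$ with radical $R$. This extra care is harmless and arguably an improvement; your argument that $\varphi$ is not a scalar multiple of $\beta$ (because their radicals differ) is the right way to confirm the codeword is non-zero.
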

\begin{proof}
The theorem follows from Corollary~\ref{N not in R co} and Lemma~\ref{min weight}.
\end{proof}
Combining the result for $(q,n)=(2,2)$,
Theorem~\ref{thm N not in R} and  Lemma~\ref{min weight}, we have the following.
\begin{corollary}\label{corollario proj equiv}
All minimum weight codewords of $\cP_{n,2}$ for $q$ even are projectively equivalent.
\end{corollary}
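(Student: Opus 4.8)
The plan is to show that $O(V,\eta)=GO(2n+1,q)$, which acts on $\bigwedge^2V$ preserving $\cQ$ and the projective system $\varepsilon_2(\Delta_2)$ and hence induces automorphisms of $\cP_{n,2}$, is transitive up to scalars on the minimum weight codewords. The case $(q,n)=(2,2)$ has already been disposed of by the direct orbit computation at the beginning of Section~\ref{sec3}, so throughout I assume $q$ even and $(q,n)\neq(2,2)$.

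First I would identify the minimum weight codewords precisely. Recall that a codeword is a coset $\varphi+\FF_q\beta$, and that $\beta(e_{2n+1},\cdot)=0$, so the property $N\subseteq\Rad$ does not depend on the chosen representative $\varphi$. By Lemma~\ref{lemma N in R} a codeword with $N\subseteq\Rad$ has weight at least $q^{4n-5}-q^{2n-3}$, which exceeds $q^{4n-5}-q^{3n-4}$ for $n\geq 2$; hence every minimum weight codeword satisfies $N\not\subseteq\Rad$. For these, Corollary~\ref{N not in R co} rules out $\dim\Rad<2n-1$, so $\dim\Rad\geq 2n-1$; as an alternating form has even rank and the codeword is non-zero, this forces $\dim\Rad=2n-1$, that is rank $2$. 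Finally the weight list in Lemma~\ref{min weight} shows that the value $q^{4n-5}-q^{3n-4}$ occurs only in case b), namely when $R:=\Rad$ is a codimension $2$ subspace of $V$ with $N\not\subseteq R$ and $\cQ\cap R$ a cone of vertex a point $P$ over a hyperbolic quadric $Q^+(2n-3,q)$.

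Next I would reduce projective equivalence to a transitivity statement on such subspaces $R$. A rank $2$ alternating form can be written as $f\wedge g$ with $R=\ker f\cap\ker g$, so it is determined up to a scalar by $R$. Since every $g\in O(V,\eta)$ fixes the nucleus $N$ and sends a rank $2$ form of radical $R$ to a rank $2$ form of radical $g(R)$, whenever $g(R_1)=R_2$ the induced code automorphism carries the codeword attached to $R_1$ onto a scalar multiple of the one attached to $R_2$. Thus it suffices to prove that $O(V,\eta)$ acts transitively on the family $\mathcal{R}$ of codimension $2$ subspaces $R$ with $N\not\subseteq R$ and $\cQ\cap R$ a cone with vertex a point over $Q^+(2n-3,q)$.

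Finally I would obtain this transitivity from Witt's extension theorem, after checking that all members of $\mathcal{R}$ are isometric as quadratic spaces. For $R\in\mathcal{R}$ the restriction $\beta|_R$ has a one-dimensional radical $\langle P\rangle$ (because $N\not\subseteq R$ and $\dim R$ is odd), the vertex point $P$ is singular, that is $\eta(P)=0$ — this is exactly what separates case b) from the non-degenerate case a) — and $R/\langle P\rangle$ carries the non-degenerate hyperbolic form $Q^+(2n-3,q)$. Hence $(R_1,\eta|_{R_1})$ and $(R_2,\eta|_{R_2})$ are isometric; as $\eta$ is non-degenerate on $V$ (here $\eta(e_{2n+1})\neq 0$), any isometry $R_1\to R_2$ extends to some $g\in O(V,\eta)$, which then satisfies $g(R_1)=R_2$. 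Together with the $(q,n)=(2,2)$ computation this proves the corollary. The step I expect to be the main obstacle is precisely this last verification in characteristic $2$: one must handle with care the interplay between $\eta$, the alternating form $\beta$ and its nucleus $N$ in order to certify that every $R$ in case b) has the stated isometry type (one-dimensional totally singular vertex and genuinely hyperbolic residue) and that Witt's theorem applies to these quadratically degenerate subspaces; alternatively, the same transitivity can be reached concretely by combining the transitivity of $O(V,\eta)$ on the singular points $P\in\cQ$ with that of the residual orthogonal group of $\cQ_P\cong Q(2n-2,q)$ on its hyperbolic hyperplane sections.
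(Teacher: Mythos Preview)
Your proposal is correct and follows the same line as the paper: both rest on the characterisation that, for $(q,n)\neq(2,2)$, the minimum weight codewords are exactly those arising from rank~$2$ forms $\varphi$ with $N\not\subseteq\Rad$ and $\Rad\cap\cQ$ a cone over $Q^+(2n-3,q)$ (case~b) of Lemma~\ref{min weight}), together with the direct orbit computation for $(q,n)=(2,2)$. The paper simply asserts the corollary from this characterisation without spelling out the transitivity step; you supply it explicitly via Witt's extension theorem (and the alternative ``vertex then residue'' argument), which is precisely the standard justification the paper leaves to the reader.
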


\begin{corollary}
  \label{second distance}
For $n\neq 3$, the second smallest distance of the code is $q^{4n-5}-q^{2n-3}$.
\end{corollary}
\begin{proof}
  For $n=2$, using the argument of \cite[\S 2.3.2]{IL0}, the full
  spectrum of the code can be determined, and the second smallest
  weight is $q^3-q$.
  For $n=3$
  For $n\geq 4$,
  by Lemma~\ref{lemma N not in R},
  when $N\not\subseteq\Rad$ 
  we have $A\geq q^{2n-2}-q^{2n-3}-q^2$.
  Plugging this in~\eqref{eq peso final}, we immediately obtain
  $\wt(\varphi)>q^{4n-5}-q^{2n-3}$ for $n\geq 4$.
\end{proof}
  Our Main Theorem  follows from Corollary~\ref{N not in R co}, Theorem~\ref{thm N not in R}, Corollary~\ref{corollario proj equiv}
  and Corollary~\ref{second distance}.
  \par\noindent
\ ~\hfill$\square$
\begin{remark}
  Using the estimates appearing in the proof of \cite[Theorem 3.7]{ILP}
  in \cite[Equation~(9)]{ILP}, a straightforward computation
  shows that
  the
  second smallest distance of a line polar Grassmann code of orthogonal
  type is $q^{4n-5}-q^{2n-3}$ for $n>3$ also when $q$ is odd.

  We conjecture that also for $n=3$ the
  second smallest distance of the code is 
  $q^7-q^3$ (both for $q$ even and $q$ odd).
\end{remark}
\section*{Acknowledgements}
Both authors are affiliated with GNSAGA of INdAM (Italy) whose support they
acknowledge.
We thank the anonymous referee of the paper for having pointed out the
value of the second smallest distance of the code.

\vskip.2cm\noindent
\begin{minipage}[t]{\textwidth}
Authors' addresses:
\vskip.2cm\noindent\nobreak
\centerline{
\begin{minipage}[t]{7cm}
Ilaria Cardinali\\
Department of Information Engineering and Mathematics\\University of Siena\\
Via Roma 56, I-53100, Siena, Italy\\
ilaria.cardinali@unisi.it\\
\end{minipage}\hfill
\begin{minipage}[t]{6cm}
Luca Giuzzi\\
D.I.C.A.T.A.M. \\ Section of Mathematics \\
Universit\`a di Brescia\\
Via Branze 43, I-25123, Brescia, Italy \\
luca.giuzzi@unibs.it
\end{minipage}}
\end{minipage}

\begin{thebibliography}{999}
\bibitem{IL0} I. Cardinali, L. Giuzzi, \emph{Codes and Caps from Orthogonal Grassmannians}, Finite Fields Appl. {\bfseries 24} (2013), 148-169.
\bibitem{IL1} I. Cardinali, L. Giuzzi, \emph{Enumerative coding for Line Polar Grassmannians with applications to codes}, Finite Fields Appl. {\bfseries 46}
  (2017), 107-138.
\bibitem{ILP} I. Cardinali, L. Giuzzi, K. Kaipa, A. Pasini, \emph{Line Polar Grassmann Codes of Orthogonal Type},  J. Pure Applied Algebra.
{\bf 220} (2016), 1924-1934.
\bibitem{IL2}  I. Cardinali, L. Giuzzi, \emph{Minimum distance of Symplectic
 Grassmann codes}, {Linear Algebra Appl.} {\bf 488} (2016), 124-134.
\bibitem{IL3} I.~Cardinali, L.~Giuzzi,
  \emph{Line Hermitian Grassmann Codes and their Parameters},
  preprint (arXiv:1706.10255).
\bibitem{IP13}
I.~Cardinali, A.~Pasini,
\newblock {\em Grassmann and Weyl embeddings of orthogonal Grassmannians},
\newblock J. Algebr. Combin. {\bf 38} (2013), 863--888.

\bibitem{IP14}
I.~Cardinali, A.~Pasini,
\newblock \emph{ Embeddings of line-{G}rassmannians of polar spaces in Grassmann varieties.}
\newblock Groups of exceptional type, Coxeter groups and related
              geometries, Springer Proc. Math. Stat., Springer, New Delhi, {\bf 82} (2014), 75--109.

\bibitem{Gl0} S.R. Ghorpade, G. Lachaud, \emph{Higher weights of Grassmann Codes}, in ``Coding theory, cryptography and related areas (Guanajuato, 1998)'',
Springer-Verlag (2000), 122-131.
\bibitem{Go} S.R. Ghorpade, A.R. Patil, H.K. Pillai,
\emph{Decomposable subspaces, linear sections of Grassmann varieties, and
higher weights of Grassmann codes}, Finite. Fields Appl. {\bfseries 15} (2009),
54-68.
\bibitem{No} D. Yu. Nogin, \emph{Codes associated to Grassmannians},  in
  ``{Arithmetic, geometry and coding theory (Luminy, 1993)}'',
  de Gruyter (1996),  145-154.
\bibitem{R1} C.T. Ryan, \emph{An application of Grassmannian varieties to
  coding theory}, {Congr. Numer.} {\bf 57} (1987), 257-271.
\bibitem{R2} C.T. Ryan, \emph{Projective codes based on Grassmann varieties},
  {Congr. Numer.} {\bf 57} (1987), 273-279.
\bibitem{TVZ} M.A. Tsfasman, S.G. Vl{\u{a}}du{\c{t}}, D.Yu. Nogin,
  \emph{Algebraic geometric codes: basic notions},
  Mathematical Surveys and Monographs {\bf 139},
  American Mathematical Society (2007).
\bibitem{Wei} V.K. Wei, \emph{Generalized Hamming weights for linear codes},
IEEE Trans. Inform. Theory {\bfseries 37} (1991), 1412-1418.
\end{thebibliography}
\end{document}